\documentclass[reqno,10pt]{amsart}
\setlength{\textheight}{20.5cm}

\numberwithin{equation}{section}
\usepackage{latexsym}
\usepackage{amsmath}
\usepackage{amssymb}
\usepackage{mathrsfs}
\usepackage{graphicx,colordvi}
\usepackage{upgreek}
\usepackage{ifthen}
\usepackage{color}

\usepackage[T1]{fontenc}
\usepackage[latin1]{inputenc}

\setcounter{tocdepth}{2}
\numberwithin{equation}{section}

\newtheorem{defi}{Definition}[section]
\newtheorem{theorem}[defi]{Theorem}
\newtheorem{lemma}[defi]{Lemma}
\newtheorem{corollary}[defi]{Corollary}

\newtheorem{remark}[defi]{Remark}
\newtheorem{remarks}[defi]{Remarks}

\usepackage{latexsym}
\usepackage{amsmath}
\usepackage{amssymb}

\newcommand{\R}{{\mathbb R}}

\renewcommand{\epsilon}{\varepsilon}

\frenchspacing

\newcommand{\bb}{{\bb}}

\frenchspacing

\begin{document}

\title[Non-isothermal quasilinear Westervelt equation]{$L_p$-$L_q$-theory for a quasilinear  non-isothermal Westervelt equation}



\author{Mathias Wilke}
\address{Martin-Luther-Universit\"at Halle-Witten\-berg\\
         Institut f\"ur Mathematik \\
         Halle (Saale), Germany}
\email{mathias.wilke@mathematik.uni-halle.de}


\subjclass[2010]{}

 \keywords{}

\begin{abstract}
We investigate a quasilinear system consisting of the Westervelt equation from nonlinear acoustics and Pennes bioheat equation, subject to Dirichlet or Neumann boundary conditions. The concept of maximal regularity of type $L_p$-$L_q$ is applied to prove local and global well-posedness. Moreover, we show by a parameter trick that the solutions regularize instantaneously. Finally, we compute the equilibria of the system and investigate the long-time behaviour of solutions starting close to equilibria.
\end{abstract}

\maketitle
\section{Introduction}
\noindent
\emph{Thermo-acoustic lensing} describes the effect of how the speed of acoustic waves and the pressure of a region are influenced by the temperature of the underlying tissue. A meanwhile well-accepted model which takes care of this effect consists of the Westervelt equation \cite{Wes63}
\begin{equation}\label{eq:West}
u_{tt} -c^2(\theta)\Delta u-b(\theta)\Delta u_t = k(\theta)(u^2)_{tt},
\end{equation}
describing the propagation of sound in fluidic media, coupled with the so-called bioheat equation proposed by Pennes \cite{Pen48}
\begin{equation}\label{eq:bioheat}
\rho_aC_a\theta_t-\kappa_a\Delta\theta+\rho_bC_bW(\theta-\theta_a)=Q(u_t).
\end{equation}
In \eqref{eq:West}, the function $u=u(t,x)$ denotes the acoustic pressure fluctuation from an ambient value at time $t$ and position $x$.  Furthermore, $c(\theta)>0$ denotes the speed of sound, $b(\theta)>0$ the diffusivity of sound and $k(\theta)>0$ the parameter of nonlinearity.

The physical meaning of the parameters in \eqref{eq:bioheat} are as follows: $\rho_a>0$ and $\kappa_a>0$ denote the ambient density and thermal conductivity, respectively. $C_a>0$ is the ambient heat capacity and $\theta_a>0$ stands for the constant ambient temperature, $\rho_b>0$ is the density of blood, $C_b>0$ is the heat capacity of blood and $W$ denotes the perfusion rate (cooling by blood flow).

The nonlinear function $Q$ models the acoustic energy being absorbed by the surrounding tissue and $Q$ is typically of quadratic type, see Remark \ref{rem:Q}.

Considering \eqref{eq:West}-\eqref{eq:bioheat} in a bounded framework, we have to equip these equations with suitable boundary conditions. In this article, we propose either Dirichlet or Neumann boundary conditions on $u$ and $\theta$. Alltogether, we end up with the following system
\begin{equation}
\label{eq:WestPenn}
\begin{aligned}
u_{tt} -c^2(\theta)\Delta u-b(\theta)\Delta u_t &= k(\theta)(u^2)_{tt},&&\text{in }(0,T)\times\Omega,\\
\rho_aC_a\theta_t-\kappa_a\Delta\theta+\rho_bC_bW(\theta-\theta_a)&=Q(u_t),&&\text{in }(0,T)\times\Omega,\\
     \mathcal{B}_ju&= g_j,&&\text{in }(0,T)\times\partial\Omega,\\
     \mathcal{B}_\ell \theta&= h_\ell,&&\text{in }(0,T)\times\partial\Omega,\\
      (u(0),u_t(0)) &= (u_0,u_1) ,&&\text{in }\Omega,\\
      \theta(0)&= \theta_0 ,&&\text{in }\Omega,
\end{aligned}
\end{equation}
where $(j,\ell)\in\{0,1\}\times\{0,1\}$,
\begin{itemize}
\item $\mathcal{B}_0v=v|_{\partial\Omega}$ (Dirichlet boundary conditions),
\item $\mathcal{B}_1v=\partial_\nu v$ (Neumann boundary conditions),
\end{itemize}
and $u_0,u_1,\theta_0$ denote the initial conditions for $u,u_t,\theta$ at $t=0$.

We observe that as long as $b(\theta)>0$, the term $b(\theta)\Delta u_t$ renders \eqref{eq:West} into a strongly damped wave equation which is of parabolic type. Since
$$(u^2)_{tt}=2u_{tt}u+2(u_t)^2,$$
we see that parabolicity is preserved as long as $|u|$ is sufficiently close to zero. It follows that \eqref{eq:WestPenn} represents a quasilinear parabolic system for the variables $(u,u_t,\theta)$. Therefore, it is reasonable to apply $L_p$-$L_q$-theory in order to solve \eqref{eq:WestPenn}.

The Westervelt equation (with constant temperature) has been subject to a variety of articles over the last decades, see e.g.\ \cite{ClKaVe09,Kal10,KaLa09, KaLa11, KalNi21, KalTha18, MeWi11, SiWi17}, which is just a selection.

To the best knowledge of the author, there is only the article \cite{NikSaid21} which provides analytical results for \eqref{eq:WestPenn} in case of homogeneous Dirichlet boundary conditions for both $u$ and $\theta$ and provided that the diffusivity of sound $b$ \emph{does not} depend on $\theta$. The analysis in \cite{NikSaid21} is based on $L_2$-theory and some (higher-order) energy estimates. To this end, the authors have to equip the initial data with more regularity than is actually needed.

Within the present article, we are interested in the existence and uniqueness of strong solutions to \eqref{eq:WestPenn} having maximal regularity of type $L_p$-$L_q$. In particular, we present optimal conditions on the initial data $(u_0,u_1,\theta_0)$ and the boundary data $(g_j,h_\ell)$, thereby improving the assumptions on $(u_0,u_1,\theta_0)$ in \cite{NikSaid21} (for details, see below). Additionally, we investigate the temporal regularity of the solutions to \eqref{eq:WestPenn} as well as their long-time behaviour.

Our article is structured as follows. In Section \ref{sec:Linearization} we consider a suitable
linearization of \eqref{eq:WestPenn} and we prove optimal regularity results of type
$L_p$-$L_q$ for the resulting parabolic problems. Section \ref{sec:ProofofMR} is devoted to the proof of the following main-result concerning well-posedness
of \eqref{eq:WestPenn} under optimal conditions on the data $(u_0, u_1, \theta_0,g_j, h_\ell)$.
\begin{theorem}\label{thm:main}
Let $T\in (0,\infty)$, $\Omega\subset\mathbb{R}^d$ be a bounded domain with boundary $\partial\Omega\in C^2$ and suppose that $c,b,k\in C^1(\mathbb{R})$ with $b(\tau)\ge b_0>0$ for all $\tau\in\mathbb{R}$. Assume furthermore that $p,q,r,s\in (1,\infty)$ such that
$$\frac{d}{q}<2,\quad \frac{2}{r}+\frac{d}{s}<2$$
and
$$Q\in C^1\left(W_p^1((0,T);L_q(\Omega))\cap L_p((0,T);W_q^2(\Omega));L_r((0,T);L_s(\Omega))\right),$$
with $Q(0)=0$. Let $1-j/2-1/2q\neq 1/p$ and $1-\ell/2-1/2s\neq1/r$.

Then there exists $\delta=\delta(T)>0$ such that for all
$$u_0\in W_q^2(\Omega),\quad u_1\in B_{qp}^{2-2/p}(\Omega),\quad \theta_0\in B_{sr}^{2-2/r}(\Omega),$$
$$g_j\in F_{pq}^{2-j/2-1/2q}((0,T);L_q(\partial\Omega))\cap W_p^1((0,T);W_q^{2-j-1/q}(\partial\Omega))=:Y_j(0,T),$$
$$h_\ell\in F_{rs}^{1-\ell/2-1/2s}((0,T);L_s(\partial\Omega))\cap L_r((0,T);W_s^{2-\ell-1/s}(\partial\Omega)),$$
with
\goodbreak
\begin{itemize}
\item $\mathcal{B}_j u_0=g_j(0)$,
\item $\mathcal{B}_j u_1=\partial_tg_j(0)$ if $1-j/2-1/2q>1/p$,
\item $\mathcal{B}_\ell \theta_0=h_\ell(0)$ if $1-\ell/2-1/2s>1/r$,
\end{itemize}
and
$$\|u_0\|_{W_q^2(\Omega)}+\|u_1\|_{B_{qp}^{2-2/p}(\Omega)}+\|g_j\|_{Y_j(0,T)}\le\delta,$$
there exists a unique solution
$$u\in W_p^2((0,T);L_q(\Omega))\cap W_p^1((0,T);W_q^2(\Omega))$$
$$\theta\in W_r^1((0,T);L_s(\Omega))\cap L_r((0,T);W_s^2(\Omega))$$
of \eqref{eq:WestPenn}. Moreover, the solution $(u,\theta)$ is $C^1$ with respect to the data $(g_j,u_0,u_1,h_\ell,\theta_0)$.
\end{theorem}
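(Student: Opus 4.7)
The plan is to recast \eqref{eq:WestPenn} as the zero set of a smooth nonlinear operator between maximal regularity spaces and invoke the implicit function theorem at the reference state $(u,\theta)=(0,\theta_a)$; this simultaneously produces the unique solution for small data and yields the $C^1$ dependence asserted in the statement. First I would expand
$$
(u^2)_{tt}=2uu_{tt}+2(u_t)^2
$$
so as to rewrite the Westervelt equation in the quasilinear form
$$
\bigl(1-2k(\theta)u\bigr)u_{tt}-c^2(\theta)\Delta u-b(\theta)\Delta u_t=2k(\theta)(u_t)^2.
$$
Provided $u$ stays small enough in $L_\infty((0,T)\times\Omega)$, the coefficient $1-2k(\theta)u$ remains bounded below by $1/2$, preserving the parabolic character of the Westervelt equation; this is where the smallness hypothesis on $(u_0,u_1,g_j)$ enters.

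I would introduce the solution space $\mathbb{E}(T)=\mathbb{E}_1(T)\times\mathbb{E}_2(T)$, where
$$
\mathbb{E}_1(T)=W_p^2((0,T);L_q(\Omega))\cap W_p^1((0,T);W_q^2(\Omega)),
$$
$$
\mathbb{E}_2(T)=W_r^1((0,T);L_s(\Omega))\cap L_r((0,T);W_s^2(\Omega)),
$$
together with the data space $\mathbb{F}(T)$ built from the right-hand side spaces and the trace and initial-value spaces appearing in the statement. Next I would define a nonlinear map $\mathcal{N}\colon\mathbb{E}(T)\to\mathbb{F}(T)$ whose zero, for prescribed data, is exactly a solution of \eqref{eq:WestPenn}. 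Its Fr\'echet derivative at $(0,\theta_a)$ reduces to the decoupled pair of linearized problems treated in Section \ref{sec:Linearization}, which by the maximal $L_p$-$L_q$ regularity results proved there is a topological isomorphism onto $\mathbb{F}(T)$. The implicit function theorem then furnishes a unique small solution for data close to the one generated by $(0,\theta_a)$, together with the claimed $C^1$ dependence on $(g_j,u_0,u_1,h_\ell,\theta_0)$; the compatibility assumptions on $\mathcal{B}_j u_0,\mathcal{B}_j u_1,\mathcal{B}_\ell\theta_0$ together with $Q(0)=0$ ensure that the reference data lies in the image of $\mathcal{N}$.

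The substantive work is to verify that $\mathcal{N}$ is of class $C^1$, i.e.\ that the products $u u_{tt}$, $(u_t)^2$, the Nemytskii maps $\theta\mapsto c^2(\theta),b(\theta),k(\theta)$, and the composition $Q(u_t)$ act smoothly between the indicated function spaces. The embedding $\mathbb{E}_1(T)\hookrightarrow C([0,T];W_q^2(\Omega))\hookrightarrow C([0,T];L_\infty(\Omega))$ (using $p>1$ and the hypothesis $d/q<2$) controls $uu_{tt}$ in $L_p((0,T);L_q(\Omega))$ and makes $c^2(\theta),b(\theta),k(\theta)$ bounded pointwise multipliers on $\mathbb{E}_1(T)$; the analogous embedding for $\mathbb{E}_2(T)$ encoded in $2/r+d/s<2$ handles the dependence on $\theta$. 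The quadratic term is treated via an anisotropic interpolation of the form
$$
\mathbb{E}_1(T)\hookrightarrow W_p^\sigma((0,T);W_q^{2(1-\sigma)}(\Omega))\hookrightarrow L_{2p}((0,T);L_{2q}(\Omega))
$$
for any $\sigma\in(1/(2p),1-d/(4q))$, whose existence is guaranteed precisely by $d/q<2$ and $p>1$. The main obstacle I anticipate lies in this last mixed-derivative embedding for $(u_t)^2$, since it is the only point at which the interaction between the temporal and spatial scales of $\mathbb{E}_1(T)$ must be exploited beyond the standard maximal regularity machinery; once this estimate and the assumed $C^1$ behaviour of $Q$ are in hand, the nonlinear remainder is a genuine perturbation on a small ball in $\mathbb{E}(T)$ and the implicit function theorem closes the argument.
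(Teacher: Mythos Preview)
Your overall strategy---set up a nonlinear map between maximal regularity spaces, check it is $C^1$ via the embeddings driven by $d/q<2$ and $2/r+d/s<2$, and invoke the implicit function theorem---matches the paper's. Your treatment of the product estimates (in particular the mixed-derivative embedding $\dot{\mathbb{E}}_1^u\hookrightarrow L_{2p}((0,T);L_{2q}(\Omega))$ for $(u_t)^2$) is exactly what the paper does.

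There is, however, a genuine gap in your choice of reference point. You linearize at $(u,\theta)=(0,\theta_a)$, which forces the associated reference data to be $(g_j,u_0,u_1,h_\ell,\theta_0)=(0,0,0,\mathcal{B}_\ell\theta_a,\theta_a)$. The implicit function theorem then only yields a solution for data in a $\delta$-ball around \emph{that} point, i.e.\ it imposes smallness on $\|\theta_0-\theta_a\|_{B_{sr}^{2-2/r}}$ and on $h_\ell-\mathcal{B}_\ell\theta_a$ in addition to $(u_0,u_1,g_j)$. But the theorem as stated requires smallness \emph{only} of $\|u_0\|+\|u_1\|+\|g_j\|$; the heat data $(h_\ell,\theta_0)$ are allowed to be arbitrary (subject only to the compatibility condition). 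Your argument therefore proves a strictly weaker statement.

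The paper fixes this by choosing a data-dependent reference: for given $(h_\ell^*,\theta_0^*)$ it lets $\theta^*\in\mathbb{E}_1^\theta$ be the unique solution of the \emph{linear} Pennes problem \eqref{eq:linHeat} with that boundary/initial data (Lemma~\ref{lem:Heat}), and then linearizes $\Phi$ at $(0,\theta^*,0,0,0,h_\ell^*,\theta_0^*)$. Since $u_*=0$, the Fr\'echet derivative still has lower-triangular structure (note it is not fully decoupled: the linearized Pennes equation carries the term $-Q'(0)\hat u_t$), and Lemma~\ref{lem:WestHeat} shows it is an isomorphism with variable coefficients $c^2(\theta^*),b(\theta^*)$. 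The implicit function theorem then produces a $\delta$-ball around $(0,0,0,h_\ell^*,\theta_0^*)$; taking $(h_\ell,\theta_0)=(h_\ell^*,\theta_0^*)$ gives the theorem with smallness only on $(u_0,u_1,g_j)$, as claimed.
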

\begin{remark}\label{rem:Q}
The nonlinear function $Q$ can for instance be modeled by
$$Q(u_t)=C\cdot(u_t)^2$$
or
$$Q(u_t)=\frac{C}{T}\int_0^T (u_t)^2dt$$
for some constant $C>0$, see e.g. \cite{HallCleve99}, \cite{HallCleveHyn01}, \cite{NorPurr16} . In these cases it can be readily checked that $Q(0)=0$ and
$$Q\in C^1\left(W_p^1((0,T);L_q(\Omega))\cap L_p((0,T);W_q^2(\Omega));L_r((0,T);L_s(\Omega))\right).$$
provided that
$$\frac{2}{p}+\frac{d}{q}<2+\frac{1}{r}+\frac{d}{2s}.$$
\end{remark}
For the proof of Theorem \ref{thm:main} we employ the implicit function theorem and the
results on optimal regularity of the linearization from Section \ref{sec:Linearization}.
In order to compare our results in Theorem \ref{thm:main} with \cite[Theorem 4.1]{NikSaid21}, we consider the very special case $d\in\{1,2,3\}$, $p=q=s=2$ and $g_j=h_\ell=0$ in Theorem \ref{thm:main}.
\begin{corollary}
Let $T\in (0,\infty)$, $d\in\{1,2,3\}$, $\Omega\subset\mathbb{R}^d$ be a bounded domain with boundary $\partial\Omega\in C^2$ and suppose that $c,b,k\in C^1(\mathbb{R})$ with $b(\tau)\ge b_0>0$ for all $\tau\in\mathbb{R}$. Assume furthermore that $r\in (1,\infty)$ such that
$$\frac{2}{r}+\frac{d}{2}<2$$
and
$$Q\in C^1\left(W_2^1((0,T);L_2(\Omega))\cap L_2((0,T);W_2^2(\Omega));L_r((0,T);L_2(\Omega))\right),$$
with $Q(0)=0$.  Let $3/4-\ell/2\neq1/r$.

Then there exists $\delta=\delta(T)>0$ such that for all
$$u_0\in W_2^2(\Omega),\quad u_1\in W_{2}^{1}(\Omega),\quad \theta_0\in B_{2r}^{2-2/r}(\Omega),$$
with
\begin{itemize}
\item $\mathcal{B}_j u_0=0$,
\item $\mathcal{B}_j u_1=0$ if $3/4-j/2>1/2$,
\item $\mathcal{B}_\ell \theta_0=0$ if $3/4-\ell/2>1/r$,
\end{itemize}
and
$$\|u_0\|_{W_2^2(\Omega)}+\|u_1\|_{W_{2}^{1}(\Omega)}\le\delta,$$
there exists a unique solution
$$u\in W_2^2((0,T);L_2(\Omega))\cap W_2^1((0,T);W_2^2(\Omega))$$
$$\theta\in W_r^1((0,T);L_2(\Omega))\cap L_r((0,T);W_2^2(\Omega))$$
of \eqref{eq:WestPenn} with $g_j=h_\ell=0$.
\end{corollary}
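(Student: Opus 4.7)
The plan is to derive this as a direct specialization of Theorem~\ref{thm:main}, so the entire task is to check that the displayed choices of exponents and data satisfy the hypotheses of the main theorem, and to identify the resulting Besov spaces with classical Sobolev spaces.

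First, I would fix $p=q=s=2$ and verify the two numerical conditions: the inequality $d/q<2$ becomes $d/2<2$, which holds precisely for $d\in\{1,2,3\}$, and $2/r+d/s<2$ is directly the hypothesis $2/r+d/2<2$. The assumption on $Q$ is imposed verbatim, and the non-resonance condition $1-j/2-1/2q\neq 1/p$ reduces to $3/4-j/2\neq 1/2$, which is automatic for both $j=0$ and $j=1$; the corresponding condition for $\theta$ becomes $3/4-\ell/2\neq 1/r$, which is the hypothesis of the corollary.

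Next, I would translate the data spaces. Setting $q=p=2$ gives $B_{qp}^{2-2/p}(\Omega)=B_{22}^{1}(\Omega)$, and since $B_{22}^{1}=W_2^{1}$, the required regularity for $u_1$ reduces to $u_1\in W_2^1(\Omega)$. The space for $\theta_0$ is $B_{2r}^{2-2/r}(\Omega)$ as claimed. Because $g_j=h_\ell=0$, the boundary-data spaces $Y_j(0,T)$ and the analogous space for $h_\ell$ are trivially met, and the compatibility conditions $\mathcal{B}_ju_0=g_j(0)$, $\mathcal{B}_ju_1=\partial_tg_j(0)$, and $\mathcal{B}_\ell\theta_0=h_\ell(0)$ reduce to the homogeneous conditions stated in the corollary (each invoked only in the appropriate trace regime $3/4-j/2>1/2$ or $3/4-\ell/2>1/r$). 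The smallness requirement $\|u_0\|_{W_q^2}+\|u_1\|_{B_{qp}^{2-2/p}}+\|g_j\|_{Y_j(0,T)}\le\delta$ collapses to $\|u_0\|_{W_2^2}+\|u_1\|_{W_2^1}\le\delta$.

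With all hypotheses of Theorem~\ref{thm:main} verified, applying it yields a unique solution in the stated regularity classes, and the solution spaces $W_p^2((0,T);L_q(\Omega))\cap W_p^1((0,T);W_q^2(\Omega))$ and $W_r^1((0,T);L_s(\Omega))\cap L_r((0,T);W_s^2(\Omega))$ specialize to exactly those stated in the corollary. There is essentially no obstacle here beyond bookkeeping; the only small subtlety worth flagging is the identification $B_{22}^{1}(\Omega)=W_2^{1}(\Omega)$, which is standard and justifies why the intermediate Besov space disappears from the statement.
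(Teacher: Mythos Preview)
Your proposal is correct and matches the paper's approach: the corollary is stated immediately after Theorem~\ref{thm:main} as an evident specialization with $p=q=s=2$ and $g_j=h_\ell=0$, and the paper gives no separate proof. Your verification of the numerical constraints, the identification $B_{22}^{1}(\Omega)=W_2^{1}(\Omega)$, and the reduction of the compatibility and smallness conditions is exactly the bookkeeping that is implicit in the paper.
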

Let us compare the well-posedness result \cite[Theorem 4.1]{NikSaid21} concerning \eqref{eq:WestPenn} with homogeneous Dirichlet boundary conditions with our result. In \cite{NikSaid21}, the authors assume that
$$u_0\in W_2^3(\Omega),\quad u_1,\theta_0\in W_2^2(\Omega),$$
(plus compatibility conditions on $\partial\Omega$). Since
$$W_2^2(\Omega)=B_{22}^2(\Omega)\hookrightarrow B_{2r}^{2}(\Omega)\hookrightarrow B_{2r}^{2-2/r}(\Omega)$$
for any $r\ge 2$, we were able to reduce the regularity of the initial data $(u_0,u_1,\theta_0)$. Moreover, a crucial assumption in \cite{NikSaid21} is that the mapping $[\tau\mapsto b(\tau)]$ is constant and furthermore, only homogeneous Dirichlet boundary conditions for $u$ and $\theta$ are considered in \cite{NikSaid21}.
In summary, Theorem \ref{thm:main} generalizes \cite[Theorem 4.1]{NikSaid21} considerably.

In Section \ref{sec:HR} we study the regularity of the solution with respect to the temporal
variable $t$. We use a parameter trick which goes back to Angenent \cite{Ang90}, combined
with the implicit function theorem to prove that the solution enjoys higher regularity with respect to $t$ as soon as $t>0$, see Theorem \ref{thm:HR}. This result
reflects the parabolic regularization effect.

Finally, in Section \ref{sec:Equil}, we compute the equilibria of the system \eqref{eq:WestPenn} if $g_j=0$ and $h_\ell=(1-\ell)\theta_a$ and investigate the long-time behaviour of solutions starting close to equilibria. For the case of Dirichlet boundary conditions for $u$, we prove in Theorem \ref{thm:Equil} that the corresponding equilibria are exponentially stable. Since our assumptions on the initial data $(u_0,u_1,\theta_0)$ as well as on the nonlinearities are less restrictive compared to \cite{NikSaid22}, Theorem \ref{thm:Equil} may be understood of a generalization of \cite[Theorems 2.2 \& 2.3]{NikSaid22}.

\subsection{Notations} Let $p,q\in (1,\infty)$, $s\in\mathbb{R}$. Moreover, let $X$ be a Banach space and  $D\subseteq\R^m$, $m\in\mathbb{N}$. We denote by $W_q^k(D;X)$ the $X$-valued Sobolev space of order $k\in\mathbb{N}_0$ and by $B_{qp}^s(D;X)$ and $F_{qp}^s(D;X)$ the $X$-valued Besov spaces and Triebel-Lizorkin spaces, respectively.

We note on the go that $B_{qq}^s(D;X)=F_{qq}^s(D;X)$ for all $s\in\mathbb{R}$ and $q\in (1,\infty)$. Furthermore, $W_q^s(D;X)=B_{qq}^s(D;X)$ are the $X$-valued Sobolev-Slobodeckii spaces (fractional/generalized Sobolev spaces) as long as $s\in\mathbb{R}_+\setminus\mathbb{N}$.

Finally, we remark that for all $s\in\mathbb{R}$ and $q\in (1,\infty)$ it holds that $F_{q2}^s(D;X)=H_q^s(D;X)$ (the $X$-valued Bessel potential spaces) if and only if $X$ is isomorphic to a Hilbert space and $H_q^1(D;X)=W_q^1(D;X)$ if and only if $X$ is a UMD space.

For the precise definitions and further properties of all these function spaces , we refer the reader to the monograph \cite{Ama19}.

\goodbreak

\section{Maximal regularity of a linearization}\label{sec:Linearization}

Let us consider the two linear problems
\begin{equation}
\label{eq:linHeat}
\begin{aligned}
\rho_aC_a\theta_t-\kappa_a\Delta\theta+\rho_bC_bW\theta&=f_1,&&\text{in }(0,T)\times\Omega,\\
    \mathcal{B}_\ell\theta&= h_\ell,&&\text{in }(0,T)\times\partial\Omega,\\
      \theta(0)&= \theta_0 ,&&\text{in }\Omega,
\end{aligned}
\end{equation}
and
\begin{equation}
\label{eq:linWest}
\begin{aligned}
u_{tt} -a_1(t,x)\Delta u_t-a_2(t,x)\Delta u &= f_2,&&\text{in }(0,T)\times\Omega,\\
     \mathcal{B}_ju&= g_j,&&\text{in }(0,T)\times\partial\Omega,\\
      (u(0),u_t(0)) &= (u_0,u_1) ,&&\text{in }\Omega.
\end{aligned}
\end{equation}
Here $\rho_a,C_a,\rho_b,C_b,\kappa_a,W$ are positive parameters, $a_1,a_2,f,g,u_0,u_1,\theta_0$ are given functions and $(j,\ell)\in\{0,1\}\times\{0,1\}$, where
\begin{itemize}
\item $\mathcal{B}_0v=v|_{\partial\Omega}$ (Dirichlet boundary conditions) or
\item $\mathcal{B}_1v=\partial_\nu v$ (Neumann boundary conditions).
\end{itemize}
For the linear problems \eqref{eq:linHeat} and \eqref{eq:linWest} we have the following results.
\begin{lemma}\label{lem:Heat}
Let $r,s\in (1,\infty)$, $\Omega\subset\R^d$ be a bounded $C^2$-domain and let $T\in (0,\infty)$. Suppose that $1-\ell/2-1/2s\neq 1/r$.

Then there exists a unique solution
$$\theta\in W_r^1((0,T);L_s(\Omega))\cap L_r((0,T);W_s^2(\Omega))$$
of \eqref{eq:linHeat} if and only if
\begin{enumerate}
\item $f_1\in L_r((0,T);L_s(\Omega))$;
\item $h_\ell\in F_{rs}^{1-\ell/2-1/2s}((0,T);L_s(\partial\Omega))\cap L_r((0,T);W_s^{2-\ell-1/s}(\partial\Omega))$;
\item $\theta_0\in B_{sr}^{2-2/r}(\Omega)$
\item $\mathcal{B}_\ell \theta_0=h_\ell(0)$ if $1-\ell/2-1/2s>1/r$.
\end{enumerate}
\end{lemma}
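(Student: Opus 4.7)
The plan is to decompose the problem into two standard subproblems handled separately by maximal regularity: one with homogeneous boundary datum and another with homogeneous interior and initial data. Since the equation has time-independent coefficients, no zeroth-order term depending on $(t,x)$ beyond the constant $\rho_bC_bW$, and is linear, I would write $\theta=\theta^{(1)}+\theta^{(2)}$ where $\theta^{(1)}$ carries $(f_1,\theta_0)$ with $h_\ell=0$ and $\theta^{(2)}$ carries $h_\ell$ with $f_1=0$ and $\theta_0=0$, and treat the two contributions independently.

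For $\theta^{(1)}$, the realization $A_\ell$ of $-\kappa_a\Delta+\rho_bC_bW$ on $L_s(\Omega)$ with domain $\{v\in W_s^2(\Omega):\mathcal{B}_\ell v=0\}$ is a classical sectorial operator that admits a bounded $H^\infty$-calculus of angle less than $\pi/2$ (this is standard for the Dirichlet and Neumann Laplacian on bounded $C^2$-domains; cf.\ the monographs of Denk--Hieber--Prüss and Prüss--Simonett). Since $L_s(\Omega)$ is UMD for $s\in(1,\infty)$, bounded $H^\infty$-calculus implies maximal $L_r$-regularity on every $(0,T)$. The associated temporal trace space at $t=0$ is the real interpolation space $(L_s(\Omega),D(A_\ell))_{1-1/r,r}$, which is identified with $B_{sr}^{2-2/r}(\Omega)$, equipped with the boundary constraint $\mathcal{B}_\ell\theta_0=0$ precisely when $2-2/r-\ell-1/s>0$, i.e.\ when $1-\ell/2-1/2s>1/r$. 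This matches conditions (iii)--(iv) in the homogeneous-boundary case.

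For $\theta^{(2)}$, I would invoke the Denk--Hieber--Prüss theorem on anisotropic maximal $L_r$-$L_s$-regularity of parabolic boundary value problems. Specialised to $-\kappa_a\Delta+\rho_bC_bW$ on a bounded $C^2$-domain with Dirichlet ($\ell=0$) or Neumann ($\ell=1$) boundary conditions, their result characterises solvability in $W_r^1((0,T);L_s(\Omega))\cap L_r((0,T);W_s^2(\Omega))$ by the two-sided trace condition $h_\ell\in F_{rs}^{1-\ell/2-1/2s}((0,T);L_s(\partial\Omega))\cap L_r((0,T);W_s^{2-\ell-1/s}(\partial\Omega))$, together with the compatibility $\mathcal{B}_\ell\theta_0=h_\ell(0)$ imposed exactly when the time trace of $h_\ell$ at $t=0$ is defined, i.e.\ when $1-\ell/2-1/2s>1/r$. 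The necessity of these conditions follows from the anisotropic trace theorems applied to the solution class; the sufficiency is obtained by a partition of unity, local flattening of $\partial\Omega$, and reduction to a model half-space problem, which is then solved by vector-valued Fourier multipliers of Mikhlin type.

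The main technical obstacle is that in the anisotropic setting $r\neq s$ the sharp regularity class for the boundary datum involves the Triebel--Lizorkin space $F_{rs}^{1-\ell/2-1/2s}$ in time rather than a Besov space, a fact that cannot be reached by naive interpolation and requires the full strength of the Denk--Hieber--Prüss machinery; for this reason I would cite their theorem directly rather than reproduce its proof. Uniqueness of the full solution $\theta=\theta^{(1)}+\theta^{(2)}$ is then immediate from the a priori maximal regularity estimate, applied to the difference of two solutions corresponding to identical data.
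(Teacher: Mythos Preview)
Your proposal is correct and follows essentially the same route as the paper: the paper's proof consists of a single citation to \cite[Theorem 2.3]{DHP07}, which is precisely the Denk--Hieber--Pr\"uss maximal $L_r$-$L_s$-regularity theorem you invoke. Your decomposition $\theta=\theta^{(1)}+\theta^{(2)}$ and the discussion of the $H^\infty$-calculus for $A_\ell$ are a helpful unpacking of what that theorem contains, but they add no new ingredient beyond what the cited result already delivers in one stroke.
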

\begin{proof}
  The proof follows from \cite[Theorem 2.3]{DHP07}.
\end{proof}

\begin{lemma}\label{lem:West}
Let $p,q\in (1,\infty)$, $\Omega\subset\R^d$ be a bounded $C^2$-domain and let $T\in (0,\infty)$. Suppose furthermore that $a_1,a_2\in C([0,T]\times \overline{\Omega})$ and $a_1(t,x)\ge \alpha>0$ for all $(t,x)\in [0,T]\times\overline{\Omega}$. Assume that $1-j/2-1/2q\neq1/p$.

Then there exists a unique solution
$$u\in W_p^2((0,T);L_q(\Omega))\cap W_p^1((0,T);W_q^2(\Omega))$$
of \eqref{eq:linWest} if and only if
\begin{enumerate}
\item $f_2\in L_p((0,T);L_q(\Omega))$;
\item $g_j\in F_{pq}^{2-j/2-1/2q}((0,T);L_q(\partial\Omega))\cap W_p^1((0,T);W_q^{2-j-1/q}(\partial\Omega))$;
\item $u_0\in W_q^2(\Omega)$, $u_1\in B_{qp}^{2-2/p}(\Omega)$
\item $\mathcal{B}_j u_0=g_j(0)$ for all $p,q\in (1,\infty)$ and
\item $\mathcal{B}_j u_1=\partial_tg_j(0)$ if $1-j/2-1/2q>1/p$.
\end{enumerate}
\end{lemma}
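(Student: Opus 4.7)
The plan is to reduce \eqref{eq:linWest} to a first-order-in-time parabolic problem by substituting $v:=u_t$, and then to absorb the remaining term $a_2(t,x)\Delta u$ via a contraction argument on a short time interval. Writing $\mathbb{E}(J):=W_p^2(J;L_q(\Omega))\cap W_p^1(J;W_q^2(\Omega))$ for any interval $J\subset[0,T]$, necessity of (1)--(5) is routine: if $u\in\mathbb{E}((0,T))$ solves \eqref{eq:linWest}, then $u_t\in W_p^1((0,T);L_q(\Omega))\cap L_p((0,T);W_q^2(\Omega))$, and the standard temporal and spatial traces produce $u_0\in W_q^2(\Omega)$, $u_1\in B_{qp}^{2-2/p}(\Omega)$, the claimed regularity of $g_j=\mathcal{B}_j u$ (the Triebel--Lizorkin-in-time part via the trace of $u$, the $W_p^1$-in-time part via the trace of $u_t$), and the compatibility conditions at $t=0$.

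For sufficiency I would first treat the case $a_2\equiv 0$. Setting $v:=u_t$ produces
\begin{equation*}
v_t-a_1(t,x)\Delta v=f_2,\quad \mathcal{B}_j v=\partial_t g_j,\quad v(0)=u_1.
\end{equation*}
The assumption on $g_j$ gives $\partial_t g_j\in F_{pq}^{1-j/2-1/2q}((0,T);L_q(\partial\Omega))\cap L_p((0,T);W_q^{2-j-1/q}(\partial\Omega))$, which is exactly the boundary-data class required by Lemma \ref{lem:Heat}; the condition $\mathcal{B}_j u_1=\partial_t g_j(0)$ (when $1-j/2-1/2q>1/p$) matches the corresponding parabolic compatibility. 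Since $a_1\in C([0,T]\times\overline{\Omega})$ with $a_1\ge\alpha>0$, a standard localization-and-freezing argument reduces the problem to the constant-coefficient setting covered by \cite[Theorem 2.3]{DHP07}, yielding a unique $v\in W_p^1((0,T);L_q(\Omega))\cap L_p((0,T);W_q^2(\Omega))$. Then $u:=u_0+\int_0^\cdot v\,\mathrm{d}s\in\mathbb{E}((0,T))$, and $\mathcal{B}_j u=\mathcal{B}_j u_0+\int_0^\cdot\partial_s g_j\,\mathrm{d}s=g_j$ by (4).

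For general $a_2\in C([0,T]\times\overline{\Omega})$ I would run a contraction argument on a short interval $[0,T_0]$. For $\tilde u$ in the affine subspace of $\mathbb{E}((0,T_0))$ satisfying $\tilde u(0)=u_0$, $\tilde u_t(0)=u_1$, $\mathcal{B}_j\tilde u=g_j$, let $\Phi(\tilde u):=u$ denote the unique solution of the $a_2\equiv 0$ problem with forcing $f_2+a_2\Delta\tilde u\in L_p((0,T_0);L_q(\Omega))$. The difference $\Phi(\tilde u_1)-\Phi(\tilde u_2)$ has vanishing data, so the step-one estimate yields
$$\|\Phi(\tilde u_1)-\Phi(\tilde u_2)\|_{\mathbb{E}((0,T_0))}\le C\|a_2\|_\infty\|\tilde u_1-\tilde u_2\|_{L_p((0,T_0);W_q^2(\Omega))},$$
with $C$ independent of $T_0\in(0,T]$ (extend the right-hand side by zero to $[0,T]$ and restrict). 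From $(\tilde u_1-\tilde u_2)(0)=0$ and the fundamental theorem of calculus, Hölder's inequality gives $\|\tilde u_1-\tilde u_2\|_{L_p((0,T_0);W_q^2)}\le T_0\|\tilde u_1-\tilde u_2\|_{\mathbb{E}((0,T_0))}$, so $\Phi$ is a strict contraction for $T_0$ sufficiently small. A unique short-time solution is obtained by Banach's fixed-point theorem and then extended to $[0,T]$ by iteration, using $(u(T_0),u_t(T_0))\in W_q^2(\Omega)\times B_{qp}^{2-2/p}(\Omega)$ as new initial data.

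The main technical obstacle is the bookkeeping of the boundary datum under the substitution $v=u_t$: one must verify that temporal differentiation lowers the anisotropic Triebel--Lizorkin order by exactly one, so that $g_j$ in the class listed in (2) maps precisely to a $\partial_t g_j$ in the class needed by Lemma \ref{lem:Heat}, and that condition (5) matches the parabolic compatibility for the reduced problem. Once this translation is checked, the remainder is a standard combination of $L_p$--$L_q$ parabolic maximal regularity and a Banach contraction.
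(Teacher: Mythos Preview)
Your argument is correct and shares the same first step as the paper's proof: the substitution $v=u_t$ reduces the case $a_2\equiv 0$ to the parabolic problem $v_t-a_1\Delta v=f_2$, $\mathcal{B}_j v=\partial_t g_j$, $v(0)=u_1$, solved via \cite[Theorem~2.3]{DHP07}, and $u=u_0+\int_0^\cdot v$ recovers the solution of \eqref{eq:linWest} (your bookkeeping of the boundary datum under $\partial_t$ is exactly what the paper checks in its necessity part).

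The genuine difference is in how the lower-order term $a_2\Delta u$ is absorbed. You treat it as a right-hand side and run a Banach fixed-point argument on short intervals, using $\|\tilde u_1-\tilde u_2\|_{L_p((0,T_0);W_q^2)}\lesssim T_0\|\tilde u_1-\tilde u_2\|_{\mathbb{E}((0,T_0))}$ to gain smallness, then concatenate. The paper instead rewrites the homogeneous-data problem \eqref{eq:linWest4} as a first-order system for $z=(w,w_t)$ in $X_0=D(\Delta_j)\times L_q(\Omega)$, observes that $A_2(t)=\begin{pmatrix}0&0\\ a_2(t,\cdot)\Delta&0\end{pmatrix}$ is a \emph{bounded} perturbation on $X_0$ (not merely relatively bounded), and invokes \cite[Theorem~3.1]{PrSchn01} to solve on the full interval $[0,T]$ in one stroke. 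Your route is more elementary and self-contained; the paper's avoids the iteration step and the uniformity-in-$T_0$ bookkeeping for the maximal-regularity constant, at the price of citing an additional abstract result.
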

\begin{proof}
We start with the necessity part. If
$$u\in W_p^2((0,T);L_q(\Omega))\cap W_p^1((0,T);W_q^2(\Omega))$$
is a solution of \eqref{eq:linWest}, then clearly $f\in L_p((0,T;L_q(\Omega))$ by the assumptions on $a_j$ and by the first equation in \eqref{eq:linWest}. Furthermore,
$$W_p^2((0,T);L_q(\Omega))\cap W_p^1((0,T);W_q^2(\Omega))\hookrightarrow W_p^1((0,T);W_q^2(\Omega))\hookrightarrow C([0,T];W_q^2(\Omega))$$
by Sobolev embedding, hence $u_0=u(0)\in W_p^2(\Omega)$. Since
$$\partial_t u\in W_p^1((0,T);L_q(\Omega))\cap L_p((0,T);W_q^2(\Omega)),$$
it follows that $u_1=\partial_t u(0)\in B_{qp}^{2-2/p}(\Omega)$, see e.g. \cite[Theorem 3.4.8]{PS16}.

Concerning the boundary data $g_j$, note that $\mathcal{B}_j u\in W_p^1((0,T);W_q^{2-j-1/q}(\partial\Omega))$ and
$$\mathcal{B}_j \partial_tu\in F_{pq}^{1-j/2-1/2q}((0,T);L_q(\partial\Omega))\cap L_p((0,T);W_q^{2-j-1/q}(\partial\Omega)),$$
see e.g. \cite[Chapter VIII]{Ama19}, \cite[Section 6]{DHP07} or \cite[Section 6.2]{PS16}.
Since, by Theorem 1.2 in \cite{MeyrVer12},
$$W_p^1((0,T);W_q^{2-j-1/q}(\partial\Omega))\hookrightarrow F_{pq}^{1-j/2-1/2q}((0,T);L_q(\partial\Omega)),$$
we obtain
$$g_j,\partial_t g_j\in F_{pq}^{1-j/2-1/2q}((0,T);L_q(\partial\Omega))\cap L_p((0,T);W_q^{2-j-1/q}(\partial\Omega)),$$
hence
$$g_j\in F_{pq}^{2-j/2-1/2q}((0,T);L_q(\partial\Omega))\cap W_p^1((0,T);W_q^{2-j-1/q}(\partial\Omega)),$$
by \cite[Proposition 3.10]{MeyrVer12}.

Since $\mathcal{B}_ju=g_j\in W_p^1((0,T);W_q^{2-j-1/q}(\partial\Omega))$ and
$$W_p^1((0,T);W_q^{2-j-1/q}(\partial\Omega))\hookrightarrow C([0,T];W_q^{2-j-1/q}(\partial\Omega)),$$
we necessarily have $\mathcal{B}_ju_0=g_j(0)$ for all $p,q\in (1,\infty)$. Furthermore,
$$\mathcal{B}_j\partial_tu=\partial_tg_j\in F_{pq}^{1-j/2-1/2q}((0,T);L_q(\partial\Omega))\cap L_p((0,T);W_q^{2-j-1/q}(\partial\Omega))$$
hence $\mathcal{B}_j u_1=\partial_t g_j(0)$ provided $1-j/2-1/2q>1/p$ (see e.g. \cite[Corollary VIII.1.1.4]{Ama19}) or \cite[Section 6.2]{PS16}).

We now prove that the conditions in Lemma \ref{lem:West} are also sufficient. To this end, we first consider the problem
\begin{equation}
\label{eq:linWest2}
\begin{aligned}
v_{t} -a_1(t,x)\Delta v &= f_2,&&\text{in }(0,T)\times\Omega,\\
     \mathcal{B}_jv&= \partial_tg_j,&&\text{in }(0,T)\times\partial\Omega,\\
      v(0) &= u_1 ,&&\text{in }\Omega.
\end{aligned}
\end{equation}
By \cite[Theorem 2.3]{DHP07} there exists a unique solution
$$v\in W_p^1((0,T);L_q(\Omega))\cap L_p((0,T);W_q^2(\Omega))$$
of \eqref{eq:linWest2}. Define
$$u(t,x)=u_0(x)+\int_0^t v(s,x)ds,\quad t\in [0,T].$$
Then
$$u\in W_p^2((0,T);L_q(\Omega))\cap W_p^1((0,T);W_q^2(\Omega)),$$
$u(0,x)=u_0(x)$, $\mathcal{B}_j u(t,x)=g_j(t,x)$ (by the compatibility condition on $u_0$) and $\partial_t^k u(t,x)=\partial_t^{k-1}v(t,x)$ for $k\in\{1,2\}$. Consequently, the function $u$ is the unique solution of the problem
\begin{equation}
\label{eq:linWest3}
\begin{aligned}
u_{tt} -a_1(t,x)\Delta u_t &= f_2,&&\text{in }(0,T)\times\Omega,\\
     \mathcal{B}_ju&= g_j,&&\text{in }(0,T)\times\partial\Omega,\\
      (u(0),u_t(0)) &= (u_0,u_1) ,&&\text{in }\Omega.
\end{aligned}
\end{equation}
Uniqueness can be seen as follows. If $u_1$ and $u_2$ are two solutions of \eqref{eq:linWest3}, then $u_1-u_2$ solves \eqref{eq:linWest3} with $(f_2,g_j,u_0,u_1)=0$ and therefore, $\partial_t (u_1-u_2)$ solves \eqref{eq:linWest2} with $(f_2,g_j,u_1)=0$, wherefore $\partial_t (u_1-u_2)=0$. Since $(u_1-u_2)(0)=0$, it follows that $u_1-u_2=0$, hence $u_1=u_2$.

Next, we consider the problem
\begin{equation}
\label{eq:linWest4}
\begin{aligned}
w_{tt} -a_1(t,x)\Delta w_t-a_2(t,x)\Delta w &= \tilde{f}_2,&&\text{in }(0,T)\times\Omega,\\
     \mathcal{B}_jw&= 0&&\text{in }(0,T)\times\partial\Omega,\\
      (w(0),w_t(0)) &= (0,0) ,&&\text{in }\Omega,
\end{aligned}
\end{equation}
for given $\tilde{f}_2\in L_p((0,T);L_q(\Omega))$. Note that for a sufficiently  smooth solution, it holds that $\mathcal{B}_jw_t= 0$ in $(0,T)\times\partial\Omega$. We reformulate \eqref{eq:linWest4} as a first order system. To this end, let $z=(z_1,z_2)=(w,w_t)$ and $F=(0,\tilde{f}_2)$. Then
\begin{equation}\label{eq:1stOrder}
z_t=\begin{pmatrix}
0 & I\\ 0 & a_1(t,x)\Delta
\end{pmatrix}z+\begin{pmatrix}
0 & 0\\ a_2(t,x)\Delta & 0
\end{pmatrix}z+F,
\end{equation}
with the initial condition $z(0)=0$ in $\Omega$ and the boundary condition $\mathcal{B}_jz= 0$ in $(0,T)\times\partial\Omega$. Let 
$$D(\Delta_j)=\{w\in W_q^2(\Omega)\mid \mathcal{B}_j w=0\ \text{on}\  \partial\Omega\}$$
and define
$X_0=D(\Delta_j)\times L_q(\Omega)$ as well as $X_1=D(\Delta_j)\times D(\Delta_j)$. Furthermore, let
$$A_1(t)=\begin{pmatrix}
0 & I\\ 0 & a_1(t,\cdot)\Delta
\end{pmatrix}\quad\text{and}\quad A_2(t)=\begin{pmatrix}
0 & 0\\ a_2(t,\cdot)\Delta & 0
\end{pmatrix}.$$
Then, we have $A_1\in C([0,T];\mathcal{L}(X_1,X_0))$ and $A_2\in C([0,T];\mathcal{L}(X_0,X_0))$. Moreover, $A_1(t)$ has the property of $L_p$-maximal regularity in $X_0$ for any $t\in [0,T]$.

By \cite[Theorem 3.1]{PrSchn01} there exists a unique solution
$$z\in W_p^1((0,T);X_0)\cap L_p((0,T);X_1)$$
of the equation \eqref{eq:1stOrder} subject to the initial  condition $z(0)=0$. This in turn yields the existence and uniqueness of a solution 
$$w\in W_p^2((0,T);L_q(\Omega))\cap W_p^1((0,T);W_q^2(\Omega)),$$
of \eqref{eq:linWest4}. Finally, we solve \eqref{eq:linWest3} to obtain a solution 
$$\tilde{u}\in W_p^2((0,T);L_q(\Omega))\cap W_p^1((0,T);W_q^2(\Omega)).$$ 
Then, we solve \eqref{eq:linWest4} with $\tilde{f}_2:=a_2\Delta\tilde u\in L_p((0,T);L_q(\Omega))$ to obtain a solution 
$$\tilde{w}\in W_p^2((0,T);L_q(\Omega))\cap W_p^1((0,T);W_q^2(\Omega)).$$ 
It is readily checked that the sum 
$$u:=\tilde{u}+\tilde{w}\in W_p^2((0,T);L_q(\Omega))\cap W_p^1((0,T);W_q^2(\Omega))$$ 
is the unique solution of \eqref{eq:linWest}.

\end{proof}

Finally, let us consider the following coupled linear problem
\begin{equation}
\label{eq:linWestHeat}
\begin{aligned}
u_{tt} -a_1(t,x)\Delta u_t-a_2(t,x)\Delta u &= f_2,&&\text{in }(0,T)\times\Omega,\\
\rho_aC_a\theta_t-\kappa_a\Delta\theta+\rho_bC_bW\theta+Bu_t&=f_1,&&\text{in }(0,T)\times\Omega,\\
\mathcal{B}_ju&= g_j,&&\text{in }(0,T)\times\partial\Omega,\\
    \mathcal{B}_\ell\theta&= h_\ell,&&\text{in }(0,T)\times\partial\Omega,\\
      (u(0),u_t(0)) &= (u_0,u_1) ,&&\text{in }\Omega,\\
          \theta(0)&= \theta_0 ,&&\text{in }\Omega.
\end{aligned}
\end{equation}

\begin{lemma}\label{lem:WestHeat}
Let $\Omega\subset\R^d$ be a bounded $C^2$-domain, $T\in (0,\infty)$ and let $p,q,r,s\in (1,\infty)$ such that
$$B:W_p^1((0,T);L_q(\Omega))\cap L_p((0,T);W_q^2(\Omega))\to L_r((0,T);L_s(\Omega))$$
is linear and bounded. Suppose furthermore that $a_1,a_2\in C([0,T]\times \overline{\Omega})$ and $a_1(t,x)\ge \alpha>0$ for all $(t,x)\in [0,T]\times\overline{\Omega}$. Assume that $1-j/2-1/2q\neq1/p$ and $1-\ell/2-1/2s\neq1/r$.

Then there exists a unique solution
$$u\in W_p^2((0,T);L_q(\Omega))\cap W_p^1((0,T);W_q^2(\Omega)),$$
$$\theta\in W_r^1((0,T);L_s(\Omega))\cap L_r((0,T);W_s^2(\Omega))$$
of \eqref{eq:linWestHeat} if and only if
\begin{enumerate}
\item $f_1\in L_r((0,T);L_s(\Omega))$;
\item $f_2\in L_p((0,T);L_q(\Omega))$;
\item $g_j\in F_{pq}^{2-j/2-1/2q}((0,T);L_q(\partial\Omega))\cap W_p^1((0,T);W_q^{2-j-1/q}(\partial\Omega))$;
\item $h_\ell\in F_{rs}^{1-\ell/2-1/2s}((0,T);L_s(\partial\Omega))\cap L_r((0,T);W_s^{2-\ell-1/s}(\partial\Omega))$;
\item $u_0\in W_q^2(\Omega)$, $u_1\in B_{qp}^{2-2/p}(\Omega)$;
\item $\theta_0\in B_{sr}^{2-2/r}(\Omega)$;
\item $\mathcal{B}_j u_0=g_j(0)$ for all $p,q\in (1,\infty)$;
\item $\mathcal{B}_j u_1=\partial_tg_j(0)$ if $1-j/2-1/2q>1/p$;
\item $\mathcal{B}_\ell \theta_0=h_\ell(0)$ if $1-\ell/2-1/2s>1/r$.
\end{enumerate}
\end{lemma}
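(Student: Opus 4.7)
The plan is to exploit the triangular structure of \eqref{eq:linWestHeat}: the equation for $u$ is completely decoupled from $\theta$, with the coupling flowing only one way through $Bu_t$ into the heat equation. This reduces the lemma to two sequential applications of the scalar maximal regularity results already established, combined with the boundedness hypothesis on $B$.

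For the necessity direction, I would start from an assumed solution $(u,\theta)$ of the stated regularity and observe that $u$ itself solves the decoupled problem \eqref{eq:linWest} with data $(f_2,g_j,u_0,u_1)$; the necessity part of Lemma \ref{lem:West} then yields items (2), (3), (5), (7), (8). Differentiating in time puts $u_t \in W_p^1((0,T);L_q(\Omega)) \cap L_p((0,T);W_q^2(\Omega))$, which is precisely the domain of $B$, so $Bu_t \in L_r((0,T);L_s(\Omega))$. Rewriting the $\theta$-equation in the form of \eqref{eq:linHeat} with right-hand side $f_1 - Bu_t$ and applying the necessity part of Lemma \ref{lem:Heat} delivers items (4), (6), (9); membership of the combined right-hand side in $L_r((0,T);L_s(\Omega))$, together with the already-established $Bu_t \in L_r((0,T);L_s(\Omega))$, forces (1).

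For sufficiency, each subproblem is solved in turn. Under assumptions (2), (3), (5), (7), (8), Lemma \ref{lem:West} produces a unique $u$ in the claimed class. The boundedness of $B$ then places $Bu_t$ in $L_r((0,T);L_s(\Omega))$; combined with (1), the forcing $f_1 - Bu_t$ of the heat equation lies in $L_r((0,T);L_s(\Omega))$. Lemma \ref{lem:Heat}, invoked with conditions (4), (6), (9), then yields a unique $\theta$ in the desired regularity class. Uniqueness of the coupled pair is automatic: any difference of two solutions solves the homogeneous $u$-problem first, giving $u = 0$, after which the $\theta$-difference solves a homogeneous heat problem with vanishing forcing and vanishes as well.

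I do not anticipate any substantial analytic obstacle, since the coupling is one-way and the hypothesis on $B$ is calibrated precisely so that $Bu_t$ lands in the forcing class that Lemma \ref{lem:Heat} accepts. The only care required is the bookkeeping of compatibility conditions: the boundary compatibilities (7)--(9) split cleanly between the two subproblems and must be matched to exactly the conditions demanded by the corresponding scalar lemma.
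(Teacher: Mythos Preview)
Your proposal is correct and follows essentially the same approach as the paper: exploit the triangular (one-way) coupling, first solve the decoupled $u$-problem via Lemma~\ref{lem:West}, then use the boundedness of $B$ to place $Bu_t$ in $L_r((0,T);L_s(\Omega))$ and solve the $\theta$-problem via Lemma~\ref{lem:Heat}. The paper's own proof is only a few lines and your write-up is a faithful (slightly more detailed) expansion of it.
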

\begin{proof}
  Necessity of the conditions follows as in the proofs of Lemma \ref{lem:Heat} and \ref{lem:West}.

  To prove sufficiency, one first solves $\eqref{eq:linWestHeat}_{1,3,5}$ for $u$ by Lemma \ref{lem:West}. Then, by the assumption on $B$, it follows that $B u_t\in L_r((0,T);L_s(\Omega))$ is a given function. Therefore, we may solve $\eqref{eq:linWestHeat}_{2,4,6}$ by Lemma \ref{lem:Heat} to obtain $\theta$.
\end{proof}

\section{Proof of Theorem \ref{thm:main}}\label{sec:ProofofMR}

We will prove Theorem \ref{thm:main} by means of the implicit function theorem. To this end, for fixed but arbitrary $T>0$, let us first introduce the function spaces
$$\mathbb{E}_0^u:=L_p((0,T);L_q(\Omega)),\quad \mathbb{E}_0^\theta:=L_r((0,T);L_s(\Omega)),$$
$$\mathbb{E}_1^u:=W_p^2((0,T);L_q(\Omega))\cap W_p^1((0,T);W_q^2(\Omega)),$$
$$\mathbb{E}_1^\theta:=W_r^1((0,T);L_s(\Omega))\cap L_r((0,T);W_s^2(\Omega)),$$
$$Y^u_j:=F_{pq}^{2-j/2-1/2q}((0,T);L_q(\partial\Omega))\cap W_p^1((0,T);W_q^{2-j-1/q}(\partial\Omega)),$$
$$Y^\theta_\ell:=F_{rs}^{1-\ell/2-1/2s}((0,T);L_s(\partial\Omega))\cap L_r((0,T);W_s^{2-\ell-1/s}(\partial\Omega)),$$
$$X_\gamma^u:=W_q^2(\Omega)\times B_{qp}^{2-2/p}(\Omega),\quad X_\gamma^\theta:=B_{sr}^{2-2/r}(\Omega),$$
and
$$\mathbb{Y}_j^u:=\{(\tilde{g}_j,(\tilde{u}_0,\tilde{u}_1))\in Y_j^u\times X_\gamma^u:\mathcal{B}_j \tilde{u}_1=\partial_t\tilde{g}_j(0)\ \text{if}\ 1-j/2-1/2q>1/p,\ \mathcal{B}_j \tilde{u}_0=\tilde{g}_j(0)\},$$
$$\mathbb{Y}_\ell^\theta:=\{(\tilde{h}_\ell,\tilde{\theta}_0)\in Y_\ell^\theta\times X_\gamma^\theta:\mathcal{B}_\ell \tilde{\theta}_0=\tilde{h}_\ell(0)\ \text{if}\ 1-\ell/2-1/2s>1/r\}.$$
Next, we define a function
$$\Phi:\mathbb{E}_1^u\times \mathbb{E}_1^\theta\times \mathbb{Y}_j^u\times \mathbb{Y}_\ell^\theta\to \mathbb{E}_0^u\times \mathbb{E}_0^\theta\times\mathbb{Y}_j^u\times \mathbb{Y}_\ell^\theta ,$$
by
$$\Phi(u,\theta,g_j,u_0,u_1,h_\ell,\theta_0)=
	\begin{pmatrix}
	u_{tt} -c^2(\theta)\Delta u-b(\theta)\Delta u_t - k(\theta)(u^2)_{tt}\\
	\rho_aC_a\theta_t-\kappa_a\Delta\theta+\rho_bC_bW(\theta-\theta_a)-Q(u_t)\\
	\mathcal{B}_ju-g_j\\
	u(0)-u_0\\
	u_t(0)-u_1\\
	\mathcal{B}_\ell\theta-h_\ell\\
	\theta(0)-\theta_0
	\end{pmatrix}.
$$
Note that
$$(u^2)_{tt}=2u_{tt}\cdot u+2(u_t)^2$$
for each $u\in\mathbb{E}_1^u$. Since (by assumption) $d/q<2$, it holds that
$$\mathbb{E}_1^u\hookrightarrow W_p^1((0,T);W_q^2(\Omega))\hookrightarrow C([0,T];W_q^2(\Omega))\hookrightarrow C([0,T];C(\overline{\Omega})),$$
hence
$$\|u_{tt}\cdot u\|_{\mathbb{E}_0^u}\le C\cdot\|u\|_{\mathbb{E}_1^u}^2,$$
for some constant $C>0$.
Let
$$\dot{\mathbb{E}}_1^u:=W_p^1((0,T);L_q(\Omega))\cap L_p((0,T);W_q^2(\Omega)).$$
Then,
$$\dot{\mathbb{E}}_1^u\hookrightarrow L_{2p}((0,T);L_{2q}(\Omega))$$
provided $1/p+d/2q<2$, which is satisfied, since $d/q<2$ and $p>1$. Therefore
$$\|(u_t)^2\|_{\mathbb{E}_0^u}\le C\|u_t\|_{\dot{\mathbb{E}}_1^u}^2\le C\|u\|_{\mathbb{E}_1^u}^2,$$
for some constant $C>0$. Finally, note that
$$\mathbb{E}_1^\theta\hookrightarrow C([0,T];C(\overline{\Omega}))$$
since (by assumption) $2/r+d/s<2$. It follows that
$$\|k(\theta)(u^2)_{tt}\|_{\mathbb{E}_0^u}\le \|k(\theta)\|_{L_\infty((0,T);L_\infty(\Omega))}\|(u^2)_{tt}\|_{\mathbb{E}_0^u}\le C\|k(\theta)\|_{L_\infty((0,T);L_\infty(\Omega))}\|u\|_{\mathbb{E}_1^u}^2,$$
as well as
$$\|b(\theta)\Delta u_t\|_{\mathbb{E}_0^u}\le \|b(\theta)\|_{L_\infty((0,T);L_\infty(\Omega))}\|\Delta u_t\|_{\mathbb{E}_0^u}\le C\|b(\theta)\|_{L_\infty((0,T);L_\infty(\Omega))}\|u\|_{\mathbb{E}_1^u}$$
for some constant $C>0$, since $b,k\in C(\mathbb{R})$. Similarly, we obtain
$$\|c^2(\theta)\Delta u\|_{\mathbb{E}_0^u}\le C\|c^2(\theta)\|_{L_\infty((0,T);L_\infty(\Omega))}\|u\|_{\mathbb{E}_1^u}.$$
In summary, the mapping $\Phi$ is well-defined and
$$\Phi\in C^1(\mathbb{E}_1^u\times \mathbb{E}_1^\theta\times \mathbb{Y}_j^u\times \mathbb{Y}_\ell^\theta;\mathbb{E}_0^u\times \mathbb{E}_0^\theta\times\mathbb{Y}_j^u\times \mathbb{Y}_\ell^\theta),$$
by the assumptions on $b,c,k$ and $Q$.

Let $(h_\ell^*,\theta_0^*)\in\mathbb{Y}_\ell^\theta$ be given and denote by $\theta^*\in \mathbb{E}_1^\theta$ the unique solution of
\begin{equation}
\label{eq:linHeat2}
\begin{aligned}
\rho_aC_a\theta_t^*-\kappa_a\Delta\theta^*+\rho_bC_bW(\theta^*-\theta_a)&=0,&&\text{in }(0,T)\times\Omega,\\
    \mathcal{B}_\ell\theta^*&= h_\ell^*,&&\text{in }(0,T)\times\partial\Omega,\\
      \theta^*(0)&= \theta_0^* ,&&\text{in }\Omega,
\end{aligned}
\end{equation}
which exists thanks to Lemma \ref{lem:Heat} . Then, obviously, $\Phi(0,\theta^*,0,0,0,h_\ell^*,\theta_0^*)=0$ and
$$D_{(u,\theta)}\Phi(0,\theta^*,0,0,0,h_\ell^*,\theta_0^*)(\hat{u},\hat{\theta})=
\begin{pmatrix}
	\hat{u}_{tt} -c^2(\theta^*)\Delta \hat{u}-b(\theta^*)\Delta \hat{u}_t \\
	\rho_aC_a\hat{\theta}_t-\kappa_a\Delta\hat{\theta}+\rho_bC_bW\hat{\theta}-Q'(0)\hat{u}_t\\
	\mathcal{B}_j\hat{u}\\
	\hat{u}(0)\\
	\hat{u}_t(0)\\
	\mathcal{B}_\ell\hat{\theta}\\
	\hat{\theta}(0)
	\end{pmatrix},
$$
where $D_{(u,\theta)}\Phi$ denotes the total derivative of $\Phi$ with respect to $(u,\theta)$.
By Lemma \ref{lem:WestHeat}, the linear operator
$$D_{(u,\theta)}\Phi(0,\theta^*,0,0,0,h_\ell^*,\theta_0^*):\mathbb{E}_1^u\times \mathbb{E}_1^\theta\to \mathbb{E}_0^u\times \mathbb{E}_0^\theta\times\mathbb{Y}_j^u\times \mathbb{Y}_\ell^\theta$$
is invertible. Hence, the implicit function theorem yields some $\delta>0$ and the existence of a $C^1$-function
$$\psi:\mathbb{B}_{\mathbb{Y}_j^u\times \mathbb{Y}_\ell^\theta}((0,0,0,h_\ell^*,\theta_0^*),\delta)\to \mathbb{E}_1^u\times \mathbb{E}_1^\theta$$
such that $(0,\theta^*)= \psi(0,0,0,h_\ell^*,\theta_0^*)$ and
$$\Phi(\psi(g_j,u_0,u_1,h_\ell,\theta_0),(g_j,u_0,u_1,h_\ell,\theta_0))=0$$
for all
$$(g_j,u_0,u_1,h_\ell,\theta_0)\in \mathbb{B}_{\mathbb{Y}_j^u\times \mathbb{Y}_\ell^\theta}((0,0,0,h_\ell^*,\theta_0^*),\delta).$$
This completes the proof of Theorem \ref{thm:main}.
\goodbreak
\begin{remarks}\mbox{}
\begin{enumerate}
\item
It is possible to generalize \eqref{eq:WestPenn} to the case where the nonlinearities $c,b$ or $k$ in \eqref{eq:WestPenn} depend not only on $\theta$ but also on $\nabla\theta$. In this case, the condition
$$\frac{2}{r}+\frac{d}{s}<2$$
in Theorem \ref{thm:main} has to be replaced by the stronger condition
$$\frac{2}{r}+\frac{d}{s}<1,$$
since in this case $B_{sr}^{2-2/r}(\Omega)\hookrightarrow C^1(\overline{\Omega})$.
Then all assertions of Theorem \ref{thm:main} remain valid provided $c,b,k\in C^1(\mathbb{R}\times\mathbb{R}^d)$.

\item The nonlinearity $(u^2)_{tt}$ in \eqref{eq:WestPenn} can be replaced by the more general formulation $(f(u)u_t)_t$, where $f\in C^2(\mathbb{R})$ with $f(0)=0$. This kind of nonlinearity has been derived in \cite{KalRund21}. If $f(s)=2s$, we are in the situation of \eqref{eq:WestPenn}.

\end{enumerate}
\end{remarks}

\goodbreak

\section{Higher Regularity}\label{sec:HR}

We intend to prove that the solution $(u,\theta)$ in Theorem \ref{thm:main} enjoys more time regularity as soon as $t>0$.

Let $(u_*,\theta_*)\in\mathbb{E}_1^u\times\mathbb{E}_1^\theta$ be the unique solution to \eqref{eq:WestPenn} with $g_j=h_\ell=0$ on the interval $[0,T]$ which exists thanks to Theorem \ref{thm:main}. For fixed $\varepsilon\in (0,1)$ and $t\in [0,T/(1+\varepsilon)]$, $\lambda\in (1-\varepsilon,1+\varepsilon)$, we define $u_\lambda(t):=u_*(\lambda t)$ and $\theta_\lambda(t):=\theta_*(\lambda t)$. Then $(u_\lambda,\theta_\lambda)$ is a solution of
\begin{equation}
\label{eq:WestPennHighReg}
\begin{aligned}
\partial_t^2u_{\lambda} -\lambda^2c^2(\theta_\lambda)\Delta u_\lambda-\lambda b(\theta_\lambda)\Delta \partial_tu_\lambda &= k(\theta_\lambda)(u_\lambda^2)_{tt},&&\text{in }(0,T_\varepsilon)\times\Omega,\\
\rho_aC_a\partial_t\theta_\lambda-\lambda\kappa_a\Delta\theta_\lambda+\lambda\rho_bC_bW(\theta_\lambda-\theta_a)&=\lambda Q(\lambda^{-1}\partial_tu_\lambda),&&\text{in }(0,T_\varepsilon)\times\Omega,\\
     \mathcal{B}_ju_\lambda&= 0,&&\text{in }(0,T_\varepsilon)\times\partial\Omega,\\
     \mathcal{B}_\ell \theta_\lambda&= 0,&&\text{in }(0,T_\varepsilon)\times\partial\Omega,\\
      (u_\lambda(0),\partial_t u_\lambda(0)) &= (u_0,\lambda u_1) ,&&\text{in }\Omega,\\
      \theta_\lambda(0)&= \theta_0 ,&&\text{in }\Omega,
\end{aligned}
\end{equation}
where $T_\varepsilon:=T/(1+\varepsilon)$, $(u_0,u_1)\in X_\gamma^u$, $\theta_0\in X_\gamma^\theta$ with
$$\mathcal{B}_j {u}_1=0\ \text{if}\ 1-j/2-1/2q>1/p,\ \mathcal{B}_j {u}_0=0$$
and $\mathcal{B}_\ell {\theta}_0=0$ if $1-\ell/2-1/2s>1/r$. For those fixed initial data, we define a function
$$\Phi:(1-\varepsilon,1+\varepsilon)\times \mathbb{E}_1^u\times\mathbb{E}_1^\theta\to \mathbb{E}_0^u\times \mathbb{E}_0^\theta\times\mathbb{Y}_j^u\times \mathbb{Y}_\ell^\theta$$
by
$$\Phi(\lambda,u,\theta)=
	\begin{pmatrix}
	u_{tt} -\lambda^2c^2(\theta)\Delta u-\lambda b(\theta)\Delta u_t - k(\theta)(u^2)_{tt}\\
	\rho_aC_a\theta_t-\lambda\kappa_a\Delta\theta+\lambda\rho_bC_bW(\theta-\theta_a)-\lambda Q(\lambda^{-1}u_t)\\
	\mathcal{B}_ju\\
	u(0)-u_0\\
	u_t(0)-\lambda u_1\\
	\mathcal{B}_\ell\theta\\
	\theta(0)-\theta_0
	\end{pmatrix}.
$$
Under the conditions of Theorem \ref{thm:main}, the mapping $\Phi$ is $C^1$. Furthermore, we observe $\Phi(1,u_*,\theta_*)=0$ and
$$D_{(u,\theta)}\Phi(1,u_*,\theta_*)(\hat{u},\hat{\theta})=
\begin{pmatrix}
	\hat{u}_{tt} -c^2(\theta_*)\Delta \hat{u}-b(\theta_*)\Delta \hat{u}_t-A_1(u_*,\theta_*)\hat{\theta}-A_2(u_*,\theta_*)\hat{u} \\
	\rho_aC_a\hat{\theta}_t-\kappa_a\Delta\hat{\theta}+\rho_bC_bW\hat{\theta}-Q'((u_*)_t)\hat{u}_t\\
	\mathcal{B}_j\hat{u}\\
	\hat{u}(0)\\
	\hat{u}_t(0)\\
	\mathcal{B}_\ell\hat{\theta}\\
	\hat{\theta}(0)
	\end{pmatrix},
$$
where
$$A_1(u_*,\theta_*)\hat{\theta}:=[2c'(\theta_*)c(\theta_*)\Delta u_*+b'(\theta_*)\Delta (u_*)_t+k'(\theta_*)((u_*)^2)_{tt}]\hat{\theta}$$
and $A_2(u_*,\theta_*)\hat{u}=2k(\theta_*)(u_*\hat{u})_{tt}$.

A Neumann series argument implies that
$$D_{(u,\theta)}\Phi(1,u_*,\theta_*):\mathbb{E}_1^u\times\mathbb{E}_1^\theta\to \mathbb{E}_0^u\times \mathbb{E}_0^\theta\times\mathbb{Y}_j^u\times \mathbb{Y}_\ell^\theta$$
is invertible provided that the norm
$\|u_*\|_{\mathbb{E}_1^u}$ is sufficiently small, which
follows readily by decreasing $\|(u_0,u_1)\|_{X_\gamma^u}$, if necessary. Note that then also $\|\theta_*-\theta^*\|_{\mathbb{E}_1^\theta}$ is small, where $\theta^*$ solves \eqref{eq:linHeat2} with $h_\ell^*=0$ and $\theta_0^*=\theta_0$.

Therefore, by the implicit function theorem, there exists $r\in (0,\varepsilon)$ and a unique mapping $\phi\in C^1((1-r,1+r);\mathbb{E}_1^u\times\mathbb{E}_1^\theta)$ such that $\Phi(\lambda,\phi(\lambda))=0$ for all $\lambda\in (1-r,1+r)$ and $\phi(1)=(u_*,\theta_*)$. By uniqueness, it holds that $(u_\lambda,\theta_\lambda)=\phi(\lambda)$, hence
$$[\lambda\mapsto (u_\lambda,\theta_\lambda)]\in C^1((1-r,1+r);\mathbb{E}_1^u\times\mathbb{E}_1^\theta).$$
Since $\partial_\lambda(u_\lambda(t),\theta_\lambda(t))|_{\lambda=1}=t\partial_t(u_*,\theta_*)$, we obtain
$$[t\mapsto t\partial_t(u_*(t),\theta_*(t))]\in \mathbb{E}_1^u\times\mathbb{E}_1^\theta.$$
In particular, this yields
$$u_*\in W_p^{3}((\tau,T);L_q(\Omega))\cap W_p^{2}((\tau,T);W_q^2(\Omega)),$$
$$\theta_*\in W_r^{2}((\tau,T);L_s(\Omega))\cap W_r^{1}((\tau,T);W_s^2(\Omega)),$$
for each $\tau\in (0,T)$, as $\varepsilon\in (0,1)$ was arbitrary.

Moreover, if all nonlinearities $c,b,k$ and $Q$ are $C^m$-mappings, where $m\in\mathbb{N}$, then also $\phi\in C^m((1-r,1+r);\mathbb{E}_1^u\times\mathbb{E}_1^\theta)$ by the implicit function theorem. Inductively, this yields
$$[t\mapsto t^m\partial_t^{m}(u_*(t),\theta_*(t))]\in \mathbb{E}_1^u\times\mathbb{E}_1^\theta$$
and therefore
$$u_*\in W_p^{m+2}((\tau,T);L_q(\Omega))\cap W_p^{m+1}((\tau,T);W_q^2(\Omega)),$$
$$\theta_*\in W_r^{m+1}((\tau,T);L_s(\Omega))\cap W_r^{m}((\tau,T);W_s^2(\Omega)).$$
We have thus proven the following result.
\begin{theorem}\label{thm:HR}
Let the conditions of Theorem \ref{thm:main} be satisfied. Then
the unique solution
$$u\in W_p^2((0,T);L_q(\Omega))\cap W_p^1((0,T);W_q^2(\Omega))$$
$$\theta\in W_r^1((0,T);L_s(\Omega))\cap L_r((0,T);W_s^2(\Omega))$$
of \eqref{eq:WestPenn} with $g_j=h_\ell=0$ satisfies
$$u\in W_p^{3}((\tau,T);L_q(\Omega))\cap W_p^{2}((\tau,T);W_q^2(\Omega)),$$
$$\theta\in W_r^{2}((\tau,T);L_s(\Omega))\cap W_r^{1}((\tau,T);W_s^2(\Omega)),$$
for each $\tau\in (0,T)$.

If, in addition, $c,b,k$ and $Q$ are $C^m$-mappings, it holds that
$$u\in W_p^{m+2}((\tau,T);L_q(\Omega))\cap W_p^{m+1}((\tau,T);W_q^2(\Omega)),$$
$$\theta\in W_r^{m+1}((\tau,T);L_s(\Omega))\cap W_r^{m}((\tau,T);W_s^2(\Omega)).$$
for each $\tau\in (0,T)$.
\end{theorem}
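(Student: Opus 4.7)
The plan is to apply Angenent's parameter trick: scale time by a parameter $\lambda$ close to $1$, rewrite the problem so that this $\lambda$ enters the coefficients, and then extract time regularity from smoothness with respect to $\lambda$. Concretely, I would fix the solution $(u_*,\theta_*)$ from Theorem \ref{thm:main} on $[0,T]$ and, for $\varepsilon\in(0,1)$ and $\lambda\in(1-\varepsilon,1+\varepsilon)$, define $u_\lambda(t):=u_*(\lambda t)$ and $\theta_\lambda(t):=\theta_*(\lambda t)$ on the shrunken interval $[0,T/(1+\varepsilon)]$. A chain-rule computation shows that $(u_\lambda,\theta_\lambda)$ solves a rescaled version of \eqref{eq:WestPenn} in which $c^2$, $b$, $\kappa_a$ and $Q$ pick up explicit powers of $\lambda$, and in which the initial velocity is $\lambda u_1$ instead of $u_1$.

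Next, I would package this rescaled problem as a single nonlinear map
\begin{equation*}
\Phi:(1-\varepsilon,1+\varepsilon)\times \mathbb{E}_1^u\times\mathbb{E}_1^\theta\to \mathbb{E}_0^u\times \mathbb{E}_0^\theta\times\mathbb{Y}_j^u\times \mathbb{Y}_\ell^\theta,
\end{equation*}
whose vanishing encodes that $(u,\theta)$ solves the rescaled Westervelt--Pennes system with the given data. By the same nonlinear estimates already used in Section \ref{sec:ProofofMR}, $\Phi$ is of class $C^1$, and by construction $\Phi(1,u_*,\theta_*)=0$.

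The main point is then to check that $D_{(u,\theta)}\Phi(1,u_*,\theta_*)$ is an isomorphism. Its principal part is exactly the linearized Westervelt--Pennes operator studied in Lemma \ref{lem:WestHeat}, with $a_1=b(\theta_*)$, $a_2=c^2(\theta_*)$, and $B=Q'((u_*)_t)\partial_t$; the remaining contributions arise from differentiating the temperature-dependent coefficients and the quadratic term $(u^2)_{tt}$, and show up as lower-order couplings of the form $A_1(u_*,\theta_*)\hat\theta$ and $A_2(u_*,\theta_*)\hat u$. Since these couplings are linear in $u_*$ (and in $\theta_*-\theta^*$, where $\theta^*$ solves the purely linear heat problem with data $(0,\theta_0)$), a Neumann series perturbation around the isomorphism provided by Lemma \ref{lem:WestHeat} yields invertibility, provided $\|u_*\|_{\mathbb{E}_1^u}$ is small enough. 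This smallness is arranged by further decreasing the parameter $\delta$ in Theorem \ref{thm:main}. This Neumann series step is the main technical obstacle and the only place where the quantitative size of the data really enters.

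Finally, the implicit function theorem produces a $C^1$ curve $\lambda\mapsto\phi(\lambda)\in\mathbb{E}_1^u\times\mathbb{E}_1^\theta$ through $(u_*,\theta_*)$, and by uniqueness $\phi(\lambda)=(u_\lambda,\theta_\lambda)$. Differentiating at $\lambda=1$ and using $\partial_\lambda u_\lambda(t)|_{\lambda=1}=t\,\partial_t u_*(t)$ (and similarly for $\theta$) places $t\,\partial_t(u_*,\theta_*)$ in $\mathbb{E}_1^u\times\mathbb{E}_1^\theta$; on any interval $(\tau,T)$ with $\tau>0$ this gives exactly one extra time derivative in the stated norms. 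If $c,b,k,Q$ are additionally $C^m$, the same implicit function theorem yields $\phi\in C^m$, and an induction on the chain-rule identity $\partial_\lambda^k u_\lambda(t)|_{\lambda=1}=t^k\partial_t^k u_*(t)$ delivers the full scale of higher time regularity stated in the theorem.
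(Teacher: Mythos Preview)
Your proposal is correct and follows essentially the same approach as the paper: Angenent's parameter trick via time dilation $t\mapsto\lambda t$, the map $\Phi$ with the same domain and target, invertibility of $D_{(u,\theta)}\Phi(1,u_*,\theta_*)$ by a Neumann series argument around Lemma~\ref{lem:WestHeat} using smallness of $\|u_*\|_{\mathbb{E}_1^u}$, and the identity $\partial_\lambda^k(u_\lambda,\theta_\lambda)|_{\lambda=1}=t^k\partial_t^k(u_*,\theta_*)$ to extract higher time regularity. One minor wording issue: the perturbation terms $A_1,A_2$ are not literally linear in $u_*$ (e.g.\ $k'(\theta_*)((u_*)^2)_{tt}$ is quadratic), but what matters---and what the paper uses---is that they vanish at $u_*=0$ and hence are small when $\|u_*\|_{\mathbb{E}_1^u}$ is small.
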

\begin{remark}
Under the conditions of Theorem \ref{thm:HR} one can also prove \emph{joint} \emph{time-space} regularity by an application of the parameter trick in \cite[Section 9.4]{PS16}. We refrain from giving the details.
\end{remark}

\section{Equilibria and Long-Time Behaviour}\label{sec:Equil}

The equilibria $(u_*,\theta_*)$ of \eqref{eq:WestPenn} with $g_j=0$ and $h_\ell=(1-\ell)\theta_a$ are determined by the equations
\begin{equation}
\label{eq:EQWestPenn}
\begin{aligned}
-c^2(\theta)\Delta u_* &= 0,&&\text{in }\Omega,\\
-\kappa_a\Delta\theta_*+\rho_bC_bW(\theta_*-\theta_a)&=Q(0),&&\text{in }\Omega,\\
     \mathcal{B}_ju_*&=0 ,&&\text{on }\partial\Omega,\\
     \mathcal{B}_\ell \theta_*&=(1-\ell)\theta_a ,&&\text{on }\partial\Omega.
\end{aligned}
\end{equation}
Let us assume that $c^2(\tau)\ge c_0>0$ for all $\tau\in\mathbb{R}$. It follows that $u_*=0$ if $j=0$ or $u_*$ is an arbitrary constant if $j=1$ .

Concerning $\theta$, we observe that if $Q(0)=0$, then $\theta_*=\theta_a$ is the unique solution of $\eqref{eq:EQWestPenn}_{2,4}$.
We will show that in case $j=0$, the equilibrium $(u_*,\theta_*)=(0,\theta_a)$ is exponentially stable (in the sense of Lyapunov). In a first step, we define $\tilde{\theta}:=\theta-\theta_a$, so that we may consider the problem
\begin{equation}
\label{eq:WestPennEquilibria1}
\begin{aligned}
u_{tt} -\tilde{c}^2(\tilde{\theta})\Delta u-\tilde{b}(\tilde{\theta})\Delta u_t &= \tilde{k}(\tilde{\theta})(u^2)_{tt},&&\text{in }(0,T)\times\Omega,\\
\rho_aC_a\tilde{\theta}_t-\kappa_a\Delta\tilde{\theta}+\rho_bC_bW\tilde{\theta}&=Q(u_t),&&\text{in }(0,T)\times\Omega,\\
     u&= 0,&&\text{in }(0,T)\times\partial\Omega,\\
     \mathcal{B}_\ell\tilde{\theta}&= 0,&&\text{in }(0,T)\times\partial\Omega,\\
      (u(0),u_t(0)) &= (u_0,u_1) ,&&\text{in }\Omega,\\
      \tilde{\theta}(0)&= \tilde{\theta}_0 ,&&\text{in }\Omega,
\end{aligned}
\end{equation}
where $\tilde{\theta}_0:= \theta_0-\theta_a$ and $\tilde{f}(\tau):= f(\tau+\theta_a)$ for $f\in\{c,b,k\}$. Observe that 
$$\theta_0\in B_{sr}^{2-2/r}(\Omega)\quad\Longleftrightarrow\quad \tilde{\theta}_0\in B_{sr}^{2-2/r}(\Omega)$$
as $\theta_a$ is constant and $\Omega$ is bounded.

We define the function spaces
$$\mathbb{E}_0^u(\mathbb{R}_+):=L_p(\mathbb{R}_+;L_q(\Omega)),\quad \mathbb{E}_0^{\tilde{\theta}}(\mathbb{R}_+):=L_r(\mathbb{R}_+;L_s(\Omega)),$$
$$\mathbb{E}_1^u(\mathbb{R}_+):=\{u\in W_p^2(\mathbb{R}_+;L_q(\Omega))\cap W_p^1(\mathbb{R}_+;W_q^2(\Omega)):u=0\ \text{on}\ \partial\Omega\},$$
$$\mathbb{E}_1^{\tilde{\theta}}(\mathbb{R}_+):=\{\tilde{\theta}\in W_r^1(\mathbb{R}_+;L_s(\Omega))\cap L_r(\mathbb{R}_+;W_s^2(\Omega)):\mathcal{B}_\ell\tilde{\theta}=0\ \text{on}\ \partial\Omega\},$$
$$\mathbb{X}_\gamma^u:=\{(u_0,u_1)\in W_q^2(\Omega)\times B_{qp}^{2-2/p}(\Omega):u_1|_{\partial\Omega}=0\ \text{if}\ 1-1/2q>1/p,\ u_0|_{\partial\Omega}=0\} ,$$
and
$$\mathbb{X}_\gamma^{\tilde{\theta}}:=\{\tilde{\theta}_0\in B_{sr}^{2-2/r}(\Omega):\mathcal{B}_\ell\tilde{\theta}_0=0\ \text{on}\ \partial\Omega\ \text{if}\ 1/2-1/2s>1/r\}.$$
For $\mathbb{F}\in \{\mathbb{E}_0^u,\mathbb{E}_1^u,\mathbb{E}_0^{\tilde{\theta}},\mathbb{E}_1^{\tilde{\theta}}\}$ we define furthermore
$$v\in e^{-\omega}\mathbb{F}(\mathbb{R}_+) \quad:\Longleftrightarrow\quad [t\mapsto e^{\omega t}v(t)]\in \mathbb{F}(\mathbb{R}_+),\quad \omega\ge 0,$$
and a mapping
$$\Phi:e^{-\omega}\mathbb{E}_1^u(\mathbb{R}_+)\times e^{-\omega}\mathbb{E}_1^{\tilde{\theta}}(\mathbb{R}_+)\times \mathbb{X}_\gamma^u\times \mathbb{X}_\gamma^{\tilde{\theta}}\to e^{-\omega}\mathbb{E}_0^u(\mathbb{R}_+)\times e^{-\omega}\mathbb{E}_0^{\tilde{\theta}}(\mathbb{R}_+)\times \mathbb{X}_\gamma^u\times \mathbb{X}_\gamma^{\tilde{\theta}}$$
by
$$\Phi(u,\tilde{\theta},u_0,u_1,\tilde{\theta}_0)=
	\begin{pmatrix}
	u_{tt} -\tilde{c}^2(\tilde{\theta})\Delta u-\tilde{b}(\tilde{\theta})\Delta u_t - \tilde{k}(\tilde{\theta})(u^2)_{tt}\\
	\rho_aC_a\tilde{\theta}_t-\kappa_a\Delta\tilde{\theta}+\rho_bC_bW\tilde{\theta}-Q(u_t)\\
	u(0)-u_0\\
	u_t(0)-u_1\\
	\tilde{\theta}(0)-\tilde{\theta}_0
	\end{pmatrix}.
$$
Note that the mapping $\Phi$ is well defined and
$$\Phi\in C^1\left(\mathbb{E}_1^u(\mathbb{R}_+)\times \mathbb{E}_1^{\tilde{\theta}}(\mathbb{R}_+)\times \mathbb{X}_\gamma^u\times \mathbb{X}_\gamma^{\tilde{\theta}};\mathbb{E}_0^u(\mathbb{R}_+)\times \mathbb{E}_0^{\tilde{\theta}}(\mathbb{R}_+)\times \mathbb{X}_\gamma^u\times \mathbb{X}_\gamma^{\tilde{\theta}}\right)$$
provided that
$$Q\in C^1\left(e^{-\omega}\dot{\mathbb{E}}_1^u(\mathbb{R}_+);e^{-\omega}\mathbb{E}_0^{\tilde{\theta}}(\mathbb{R}_+)\right),$$
where
$$\dot{\mathbb{E}}_1^u(\mathbb{R}_+):=W_p^1(\mathbb{R}_+;L_q(\Omega))\cap L_p(\mathbb{R}_+;W_q^2(\Omega)).$$
Moreover, $\Phi(0,0,0,0,0)=0$ and
$$D_{(u,\tilde{\theta})}\Phi(0,0,0,0,0)(\hat{u},\hat{\theta})=
\begin{pmatrix}
	\hat{u}_{tt} -\tilde{c}^2(0)\Delta \hat{u}-\tilde{b}(0)\Delta \hat{u}_t\\
	\rho_aC_a\hat{\theta}_t-\kappa_a\Delta\hat{\theta}+\rho_bC_bW\hat{\theta}-Q'(0)\hat{u}_t\\
	\hat{u}(0)\\
	\hat{u}_t(0)\\
	\hat{\theta}(0)
	\end{pmatrix}.$$
Let us recall that the Dirichlet- as well as the Neumann-Laplacian $\Delta_m$, $m\in \{D,N\}$ has the property of $L_r$-maximal regularity in $L_s(\Omega)$, see e.g. \cite[Section 6]{PS16}. Since for any $\alpha>0$, the spectral bound of the operator $(\Delta_m-\alpha I)$ in $L_s(\Omega)$ is strictly negative, it generates an exponentially stable analytic semigroup in $L_s(\Omega)$ with $L_r$-maximal regularity.

We note furthermore, that $\tilde{c}(0)=c(\theta_a)$ and $\tilde{b}(0)=b(\theta_a)$ are positive constants. Hence, \cite[Theorem 2.5]{MeWi11} in combination with the exponential stability of the semigroup, generated by $(\Delta_m-\alpha I)$ in $L_s(\Omega)$, implies that there is some $\omega_0>0$ such that for all $\omega\in [0,\omega_0)$, the operator
$$D_{(u,\tilde{\theta})}\Phi(0,0,0,0,0):e^{-\omega}\mathbb{E}_1^u(\mathbb{R}_+)\times e^{-\omega}\mathbb{E}_1^{\tilde{\theta}}(\mathbb{R}_+)\to e^{-\omega}\mathbb{E}_0^u(\mathbb{R}_+)\times e^{-\omega}\mathbb{E}_0^{\tilde{\theta}}(\mathbb{R}_+)\times \mathbb{X}_\gamma^u\times \mathbb{X}_\gamma^{\tilde{\theta}}$$
is invertible. By the implicit function theorem, there exists some $\delta>0$ and a mapping
$$\psi\in C^1\left(\mathbb{B}_{\mathbb{X}_\gamma^u\times \mathbb{X}_\gamma^{\tilde{\theta}}}((0,0,0),\delta);e^{-\omega}\mathbb{E}_1^u(\mathbb{R}_+)\times e^{-\omega}\mathbb{E}_1^{\tilde{\theta}}(\mathbb{R}_+)\right)$$
such that $\psi(0,0,0)=(0,0)$ and
$$\Phi(\psi(u_0,u_1,\tilde{\theta}_0),(u_0,u_1,\tilde{\theta}_0))=0$$
for all $(u_0,u_1,\tilde{\theta}_0)\in \mathbb{B}_{\mathbb{X}_\gamma^u\times \mathbb{X}_\gamma^{\tilde{\theta}}}((0,0,0),\delta)$. Since $\psi(0,0,0)=0$ and $\psi$ is continuously differentiable, it follows that for each $r\in (0,\delta)$, there exists a constant $C=C(r)>0$ such that
$$\|\psi(u_0,u_1,\tilde{\theta}_0)\|_{e^{-\omega}\mathbb{E}_1^u(\mathbb{R}_+)\times e^{-\omega}\mathbb{E}_1^{\tilde{\theta}}(\mathbb{R}_+)}\le C\|(u_0,u_1,\tilde{\theta}_0)\|_{\mathbb{X}_\gamma^u\times \mathbb{X}_\gamma^{\tilde{\theta}}}$$
holds for all $(u_0,u_1,\tilde{\theta}_0)\in \mathbb{B}_{\mathbb{X}_\gamma^u\times \mathbb{X}_\gamma^{\tilde{\theta}}}((0,0,0),r)$.

For the solution $(u,\tilde{\theta})=\psi(u_0,u_1,\tilde{\theta}_0)$ of \eqref{eq:WestPennEquilibria1}, this implies the estimate
\begin{multline}\label{eq:expstab}
e^{\omega t}\left(\|u(t)\|_{W_q^2(\Omega)}+\|u_t(t)\|_{B_{qp}^{2-2/p}(\Omega)}+\|\tilde{\theta}(t)\|_{B_{sr}^{2-2/r}(\Omega)}\right)\le\\
\le C\left(\|u_0\|_{W_q^2(\Omega)}+\|u_1\|_{B_{qp}^{2-2/p}(\Omega)}+\|\tilde{\theta}_0\|_{B_{sr}^{2-2/r}(\Omega)}\right)
\end{multline}
for all $t\ge 0$. We summarize these considerations in
\begin{theorem}\label{thm:Equil}
Let $\Omega\subset\mathbb{R}^d$ be a bounded domain with boundary $\partial\Omega\in C^2$ and suppose that $c,b,k\in C^1(\mathbb{R})$ with $b(\tau)\ge b_0>0$ and $c^2(\tau)\ge c_0>0$ for all $\tau\in\mathbb{R}$. Assume furthermore that $p,q,r,s\in (1,\infty)$ such that
$$\frac{d}{q}<2,\quad \frac{2}{r}+\frac{d}{s}<2$$
and
$$Q\in C^1\left(e^{-\omega}(W_p^1(\mathbb{R}_+;L_q(\Omega))\cap L_p(\mathbb{R}_+;W_q^2(\Omega)));e^{-\omega}L_r(\mathbb{R}_+;L_s(\Omega))\right),$$
with $Q(0)=0$. Assume that $1-1/2q\neq 1/p$ and $1-\ell/2-1/2s\neq 1/r$.

Then there are $\delta>0$ and $\omega_0>0$ such that for all $\omega\in[0,\omega_0)$,
$$u_0\in W_q^2(\Omega),\quad u_1\in B_{qp}^{2-2/p}(\Omega),\quad {\theta}_0\in B_{sr}^{2-2/r}(\Omega),$$
with
\begin{itemize}
\item $u_0|_{\partial\Omega}=0$,
\item $u_1|_{\partial\Omega}=0$ if $1-1/2q>1/p$,
\item $\mathcal{B}_\ell{\theta}_0=(1-\ell)\theta_a$ on $\partial\Omega$ if $1-\ell/2-1/2s>1/r$
\end{itemize}
and
$$\|u_0\|_{W_q^2(\Omega)}+\|u_1\|_{B_{qp}^{2-2/p}(\Omega)}+\|{\theta}_0-\theta_a\|_{B_{sr}^{2-2/r}(\Omega)}\le\delta,$$
there exists a unique global solution $(u,\theta)$ of \eqref{eq:WestPenn} with
$$u\in e^{-\omega}(W_p^2(\mathbb{R}_+;L_q(\Omega))\cap W_p^1(\mathbb{R}_+;W_q^2(\Omega)))$$
$${\theta}-\theta_a\in e^{-\omega}(W_r^1(\mathbb{R}_+;L_s(\Omega))\cap L_r(\mathbb{R}_+;W_s^2(\Omega))).$$
Moreover, there exists a constant $C>0$ such that the estimate
\begin{multline*}
\|u(t)\|_{W_q^2(\Omega)}+\|u_t(t)\|_{B_{qp}^{2-2/p}(\Omega)}+\|{\theta}(t)-\theta_a\|_{B_{sr}^{2-2/r}(\Omega)}\le\\
\le Ce^{-\omega t}\left(\|u_0\|_{W_q^2(\Omega)}+\|u_1\|_{B_{qp}^{2-2/p}(\Omega)}+\|{\theta}_0-\theta_a\|_{B_{sr}^{2-2/r}(\Omega)}\right)
\end{multline*}
holds for all $t\ge 0$.
\end{theorem}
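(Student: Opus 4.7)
The plan is to combine the approach of Theorem~\ref{thm:main} with exponentially weighted spaces on the half-line, so that a single application of the implicit function theorem delivers both global existence and exponential decay. First I would reduce to the zero equilibrium by setting $\tilde{\theta}:=\theta-\theta_a$ and $\tilde{f}(\tau):=f(\tau+\theta_a)$ for $f\in\{c,b,k\}$, which recasts \eqref{eq:WestPenn} (with $g_j=0$ and $h_\ell=(1-\ell)\theta_a$) as a system in $(u,\tilde{\theta})$ with homogeneous boundary data and equilibrium $(0,0)$. Since $\theta_a$ is constant and $\Omega$ is bounded, membership of $\theta_0$ in $B_{sr}^{2-2/r}(\Omega)$ is invariant under this shift, and the Dirichlet compatibility $u_0|_{\partial\Omega}=0$ together with the (possibly vacuous) compatibility for $\tilde{\theta}_0$ transfers automatically.

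Next I would introduce, for $\omega\ge 0$, the exponentially weighted spaces $e^{-\omega}\mathbb{F}(\mathbb{R}_+):=\{v:[t\mapsto e^{\omega t}v(t)]\in\mathbb{F}(\mathbb{R}_+)\}$ for $\mathbb{F}\in\{\mathbb{E}_0^u,\mathbb{E}_1^u,\mathbb{E}_0^{\tilde{\theta}},\mathbb{E}_1^{\tilde{\theta}}\}$, together with a mapping $\Phi$ from $e^{-\omega}\mathbb{E}_1^u(\mathbb{R}_+)\times e^{-\omega}\mathbb{E}_1^{\tilde{\theta}}(\mathbb{R}_+)\times \mathbb{X}_\gamma^u\times \mathbb{X}_\gamma^{\tilde{\theta}}$ into the corresponding weighted data space, packaging the two PDEs together with the two initial conditions; the homogeneous boundary conditions are absorbed into the definition of $\mathbb{E}_1^u$ and $\mathbb{E}_1^{\tilde{\theta}}$. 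Well-definedness and $C^1$-regularity of $\Phi$ on these weighted spaces follow from the same product estimates used in Section~\ref{sec:ProofofMR}, the embeddings $\mathbb{E}_1^u\hookrightarrow C(\overline{\mathbb{R}_+};C(\overline{\Omega}))$ and $\mathbb{E}_1^{\tilde{\theta}}\hookrightarrow C(\overline{\mathbb{R}_+};C(\overline{\Omega}))$, and the standing hypothesis on $Q$; multiplication by the weight $e^{\omega t}$ commutes with pointwise products, so the $L_p$-$L_q$ bounds transfer verbatim.

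The main obstacle will be invertibility of the linearization $D_{(u,\tilde{\theta})}\Phi(0,0,0,0,0)$ on the weighted spaces for sufficiently small $\omega>0$. The linearization is the triangular coupled system consisting of the linear strongly damped Dirichlet wave equation for $\hat{u}$ with constant coefficients $\tilde{c}^2(0)=c(\theta_a)^2$ and $\tilde{b}(0)=b(\theta_a)$, coupled to the linear heat equation for $\hat{\theta}$ with reaction $\rho_bC_bW\hat{\theta}$ and forcing $Q'(0)\hat{u}_t$. The substitution $v(t):=e^{\omega t}\hat{u}(t)$ (and similarly for $\hat{\theta}$) converts $\partial_t$ into $\partial_t-\omega$; I would then invoke \cite[Theorem~2.5]{MeWi11} for the wave block and standard $L_r$-maximal regularity for the heat block. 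The Dirichlet Laplacian has strictly negative spectral bound, so the strongly damped wave semigroup is exponentially stable; the reaction $\rho_bC_bW>0$ supplies a spectral gap for the heat semigroup even under Neumann boundary conditions. Hence $L_p$- and $L_r$-maximal regularity persist under a small exponential shift $\omega\in[0,\omega_0)$, yielding uniform invertibility on the weighted spaces; the coupling $Q'(0)\hat{u}_t$ is handled by solving for $\hat{u}$ first and then $\hat{\theta}$, exactly as in Lemma~\ref{lem:WestHeat}.

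The implicit function theorem then supplies $\delta>0$ and a $C^1$-map $\psi$ on the ball of radius $\delta$ in $\mathbb{X}_\gamma^u\times\mathbb{X}_\gamma^{\tilde{\theta}}$ with $\psi(0,0,0)=(0,0)$ and $\Phi(\psi(u_0,u_1,\tilde{\theta}_0),(u_0,u_1,\tilde{\theta}_0))=0$. Because $\psi$ is $C^1$ and vanishes at the origin, on a smaller ball it is Lipschitz, giving the weighted bound of $(u,\tilde{\theta})$ in terms of the initial data. Finally, applying the time-trace embedding of $e^{-\omega}\mathbb{E}_1^u(\mathbb{R}_+)$ into the weighted space of $X_\gamma^u$-valued continuous functions on $\mathbb{R}_+$ (and the analogue for $\tilde{\theta}$), which is the weighted version of \cite[Theorem~3.4.8]{PS16}, converts the weighted norm estimate into the pointwise exponential-decay estimate asserted in the theorem.
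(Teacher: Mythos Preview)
Your proposal is correct and follows essentially the same route as the paper: shift to $\tilde{\theta}=\theta-\theta_a$, set up the nonlinear map $\Phi$ on exponentially weighted half-line spaces with the boundary conditions built into $\mathbb{E}_1^u(\mathbb{R}_+)$ and $\mathbb{E}_1^{\tilde{\theta}}(\mathbb{R}_+)$, invoke \cite[Theorem~2.5]{MeWi11} together with the negative spectral bound of $\Delta_m-\alpha I$ to obtain invertibility of the triangular linearization on the weighted spaces for $\omega\in[0,\omega_0)$, apply the implicit function theorem, and convert the resulting Lipschitz bound on $\psi$ into the pointwise decay estimate via the weighted trace embedding. The only cosmetic difference is that you make the substitution $v(t)=e^{\omega t}\hat{u}(t)$ and the final trace step explicit, whereas the paper leaves these implicit.
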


\begin{remark}
In \cite{NikSaid22}, the authors proved Theorem \ref{thm:Equil} for the case $p=q=s=2$, $d\in \{2,3\}$ under more restrictive assumptions on the initial data $(u_0,u_1,\theta_0)$ as well as on the nonlinearities $c,k,Q$ by means of higher order energy methods/estimates. Furthermore, in \cite{NikSaid22} it is assumed that the function $b$ is constant. Thus, Theorem \ref{thm:Equil} may be understood as a generalization of the results in \cite{NikSaid22}.
\end{remark}
\begin{remark}
In case $j=1$ (Neumann boundary conditions for $u$), one has to deal with a family of equilibria $(u_*,\theta_*)$, where $u_*=\mathsf{r}\in\mathbb{R}$ is constant and $\theta_*=\theta_a$. In this case, one can use the same strategy as in \cite{SiWi17} to show that each equilibrium $(\mathsf{r},\theta_*)$, with $\mathsf{r}\in\mathbb{R}$ being close to zero, is \emph{normally stable}. We refrain from giving the details and refer the interested reader to \cite{PSZ09} and \cite{SiWi17}.
\end{remark}

\bibliographystyle{abbrv}
\bibliography{WesterveltPennes}

\begin{thebibliography}{10}

\bibitem{Ama19}
H.~Amann.
\newblock {\em Linear and quasilinear parabolic problems. {V}ol. {II}}, volume
  106 of {\em Monographs in Mathematics}.
\newblock Birkh\"{a}user/Springer, Cham, 2019.
\newblock Function spaces.

\bibitem{Ang90}
S.~B. Angenent.
\newblock Nonlinear analytic semiflows.
\newblock {\em Proc. Roy. Soc. Edinburgh Sect. A}, 115(1-2):91--107, 1990.

\bibitem{ClKaVe09}
C.~Clason, B.~Kaltenbacher, and S.~Veljovi{\'c}.
\newblock Boundary optimal control of the {W}estervelt and the {K}uznetsov
  equations.
\newblock {\em J. Math. Anal. Appl.}, 356(2):738--751, 2009.

\bibitem{DHP07}
R.~Denk, M.~Hieber, and J.~Pr\"{u}ss.
\newblock Optimal ${L}^{p}$- ${L}^{q}$-estimates for parabolic boundary value
  problems with inhomogeneous data.
\newblock {\em Math. Z.}, 257(1):193--224, 2007.

\bibitem{HallCleve99}
I.~Hallaj and R.~Cleveland.
\newblock {FDTD} simulation of finite-amplitude pressure and temperature fields
  for biomedical ultrasound.
\newblock {\em The Journal of the Acoustical Society of America}, 105:L7--12,
  06 1999.

\bibitem{HallCleveHyn01}
I.~Hallaj, R.~Cleveland, and K.~Hynynen.
\newblock Simulations of the thermo-acoustic lens effect during focused
  ultrasound surgery.
\newblock {\em The Journal of the Acoustical Society of America}, 109:2245--53,
  06 2001.

\bibitem{Kal10}
B.~Kaltenbacher.
\newblock Boundary observability and stabilization for {W}estervelt type wave
  equations without interior damping.
\newblock {\em Appl. Math. Optim.}, 62(3):381--410, 2010.

\bibitem{KaLa09}
B.~Kaltenbacher and I.~Lasiecka.
\newblock Global existence and exponential decay rates for the {W}estervelt
  equation.
\newblock {\em Discrete Contin. Dyn. Syst. Ser. S}, 2(3):503--523, 2009.

\bibitem{KaLa11}
B.~Kaltenbacher and I.~Lasiecka.
\newblock Well-posedness of the {W}estervelt and the {K}uznetsov equation with
  nonhomogeneous {N}eumann boundary conditions.
\newblock {\em Discrete Contin. Dyn. Syst.}, (Dynamical systems, differential
  equations and applications. 8th AIMS Conference. Suppl. Vol. II):763--773,
  2011.

\bibitem{KalNi21}
B.~Kaltenbacher and V.~Nikoli\'{c}.
\newblock The inviscid limit of third-order linear and nonlinear acoustic
  equations.
\newblock {\em SIAM J. Appl. Math.}, 81(4):1461--1482, 2021.

\bibitem{KalRund21}
B.~Kaltenbacher and W.~Rundell.
\newblock Determining the nonlinearity in an acoustic wave equation.
\newblock {\em Mathematical Methods in the Applied Sciences}, 2021.

\bibitem{KalTha18}
B.~Kaltenbacher and M.~Thalhammer.
\newblock Fundamental models in nonlinear acoustics part {I}. {A}nalytical
  comparison.
\newblock {\em Math. Models Methods Appl. Sci.}, 28(12):2403--2455, 2018.

\bibitem{MeWi11}
S.~Meyer and M.~Wilke.
\newblock Optimal regularity and long-time behavior of solutions for the
  {W}estervelt equation.
\newblock {\em Appl. Math. Optim.}, 64(2):257--271, 2011.

\bibitem{MeyrVer12}
M.~Meyries and M.~Veraar.
\newblock Sharp embedding results for spaces of smooth functions with power
  weights.
\newblock {\em Studia Math.}, 208(3):257--293, 2012.

\bibitem{NikSaid21}
V.~Nikolic and B.~Said-Houari.
\newblock Local well-posedness of a coupled {W}estervelt-{P}ennes model of
  nonlinear ultrasonic heating.
\newblock {\em https://arxiv.org/abs/2108.06999}, 2021.

\bibitem{NikSaid22}
V.~Nikolic and B.~Said-Houari.
\newblock The {W}estervelt-{P}ennes model of nonlinear thermoacoustics: Global
  solvability and asymptotic behavior.
\newblock {\em Journal of Differential Equations}, 336:628--653, 2022.

\bibitem{NorPurr16}
G.~V. Norton and R.~D. Purrington.
\newblock The {W}estervelt equation with a causal propagation operator coupled
  to the bioheat equation.
\newblock {\em Evol. Equ. Control Theory}, 5(3):449--461, 2016.

\bibitem{Pen48}
H.~H. Pennes.
\newblock Analysis of tissue and arterial blood temperatures in the resting
  human forearm.
\newblock {\em Journal of Applied Physiology}, 1(2):93--122, 1948.
\newblock PMID: 18887578.

\bibitem{PrSchn01}
J.~Pr\"{u}ss and R.~Schnaubelt.
\newblock Solvability and maximal regularity of parabolic evolution equations
  with coefficients continuous in time.
\newblock {\em J. Math. Anal. Appl.}, 256(2):405--430, 2001.

\bibitem{PS16}
J.~Pr{\"u}ss and G.~Simonett.
\newblock {\em Moving Interfaces and Quasilinear Parabolic Evolution
  Equations}, volume 105 of {\em Monographs in Mathematics}.
\newblock Birkh\"auser, Basel, first edition, 2016.

\bibitem{PSZ09}
J.~Pr{\"u}ss, G.~Simonett, and R.~Zacher.
\newblock On convergence of solutions to equilibria for quasilinear parabolic
  problems.
\newblock {\em J. Differential Equations}, 246(10):3902--3931, 2009.

\bibitem{SiWi17}
G.~Simonett and M.~Wilke.
\newblock Well-posedness and longtime behavior for the {W}estervelt equation
  with absorbing boundary conditions of order zero.
\newblock {\em Journal of Evolution Equations}, 17:551--571, 03 2017.

\bibitem{Wes63}
P.~J. Westervelt.
\newblock Parametric {A}coustic {A}rray.
\newblock {\em The Journal of the Acoustical Society of America},
  35(4):535--537, 1963.

\end{thebibliography}

\end{document}